\theoremstyle{plain}
\newtheorem{thm}{Theorem} 
\newtheorem{lem}[thm]{Lemma}
\newtheorem{prop}[thm]{Proposition}
\theoremstyle{remark}
\newtheorem{rem}[thm]{Remark}
\def\XXint#1#2#3{{\setbox0=\hbox{$#1{#2#3}{\int}$} 
     \vcenter{\hbox{$#2#3$}}\kern-.5\wd0}}
\providecommand{\ind}{\mathds{1}} 
\providecommand{\sm}{\setminus}
\providecommand{\N}{\mathbb{N}}
\providecommand{\R}{\mathbb{R}} 
\providecommand{\C}{\mathbb{C}}
\providecommand{\eps}{\varepsilon}
\providecommand{\wto}{\rightharpoonup}
\DeclareMathOperator{\loc}{loc}
\DeclareMathOperator{\supp}{supp}
\DeclareMathOperator{\diam}{diam}
\DeclareMathOperator{\id}{id}
\DeclareMathOperator{\curl}{curl}
\DeclareMathOperator{\dist}{dist}
\renewcommand{\qed}{\hfill $\Box$}
\definecolor{Darkgblue}{rgb}{0.3,0.3,0.5}
\begin{document}

\allowdisplaybreaks

\title{Ground states for Maxwell's equations in nonlocal nonlinear media}

\author{Rainer Mandel\textsuperscript{1}}
\address{\textsuperscript{1}Karlsruhe
Institute of Technology, Institute for Analysis, Englerstra{\ss}e 2, 76131 Karlsruhe, Germany}
  
\subjclass[2020]{35J60, 35Q61}

\keywords{}
\date{\today}   

\begin{abstract}
  In this paper we investigate the existence of ground states and dual ground states for Maxwell's Equations
  in $\R^3$ in  nonlocal nonlinear metamaterials. We prove that several nonlocal models admit ground states in
  contrast to their local analogues.
\end{abstract}
 
\maketitle
\allowdisplaybreaks
\setlength{\parindent}{0cm}

\section{Introduction}


  The existence of ground states is of central importance for a large number of linear and nonlinear
  time-independent models in physics. The governing idea is that the physically most relevant nontrivial
  solution of a given PDE   is the one with  lowest  energy. Such a  solution is  called a ground
  state. In this paper we are interested in ground states for the nonlinear Maxwell
  equations
  \begin{equation}\label{eq:Maxwell-time}
	\partial_t \mathcal D-\nabla \times \mathcal H= 0, \qquad
	\partial_t \mathcal B+\nabla \times 	\mathcal E=0, \qquad
	\nabla \cdot \mathcal D=\nabla \cdot \mathcal B=0,
  \end{equation}
  that describe the propagation of electromagnetic waves in optical media without charges and currents. The
  symbols $\mathcal E,\mathcal D:\R^3\to\C^3$ denote the electric field and the electric induction whereas  
  $\mathcal H,\mathcal B:\R^3\to\C^3$ represent the magnetic field and the magnetic induction,
  respectively.
  This overdetermined system is accompanied with constitutive relations that provide a link between these
  quantities. In homogeneous and isotropic media it is usual to assume 
  $$
    \mathcal D=\eps \mathcal E + \mathcal P
    \quad\text{and}\quad 
    \mathcal B= \mu \mathcal H
  $$ 
  where $\eps,\mu\in\R\sm\{0\}$ and $\mathcal P$ denotes the so-called polarization field. In
  \cite[Section~1.3]{BaDoPlRe_GroundStates} it was shown that a time-harmonic ansatz for the electric
  field $\mathcal E(x,t)= e^{i\omega t}E(x)$ and $\mathcal P(x,t)=|\mathcal E(x,t)|^{2q-2}\mathcal E(x,t)$
  leads, after a suitable rescaling, to a nonlinear curl-curl equation of the
  form
  \begin{equation} \label{eq:Local}
    \nabla\times  \nabla\times E + E =  |E|^{2q-2} E \qquad\text{in }\R^3
  \end{equation}
  provided that $\eps\mu\omega^2$ is negative. This assumption does not hold in natural propagation
  media like vacuum, glass or water, but it holds for certain artificially produced metamaterials where
  $\eps\mu$ can be negative~\cite{SmithVierKrollSchultz}. It is therefore reasonable to investigate the
  existence of ground states for~\eqref{eq:Local} in order to single out physically relevant  
  solutions of nonlinear Maxwell equations. The energy functional associated with \eqref{eq:Local} is given by 
  $$
   \mathcal I(E) := \frac{1}{2} \int_{\R^3} |\nabla\times E|^2+|E|^2\,dx 
   - \frac{1}{2q} \int_{\R^3}  |E|^{2q}\,dx 
  $$
  for $E\in H:=H^1(\curl;\R^3)=\{ E\in L^2(\R^3;\R^3): \nabla\times E\in L^2(\R^3;\R^3)\}$. So a ground state
  is a nontrivial solution of the equation $\mathcal I'(E)=0$ such that $\mathcal I(E)$ is smallest possible
  among all nontrivial solutions. Our first observation is that ground states for this system do not exist,
  which is in striking contrast to the rich theory of ground states in the context of stationary nonlinear
  Schr\"odinger equations of the form $-\Delta u+u=|u|^{2q-2}u$ in $\R^N$~\cite{BerLio,Strauss}. Borrowing
  ideas from \cite[Theorem~1.1]{BaDoPlRe_GroundStates}, we get the following.

  \begin{prop} \label{prop:Local}
    Assume  $1<q<\infty$. Then~\eqref{eq:Local} does not have a ground state solution in $H^1(\curl;\R^3)\cap
    L^{2q}(\R^3;\R^3)$.
  \end{prop}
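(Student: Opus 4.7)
The plan is to adapt the reasoning of \cite[Theorem~1.1]{BaDoPlRe_GroundStates}, exploiting the fact that $\mathcal{I}$ is strongly indefinite on $H$: the curl--curl operator has an infinite-dimensional kernel, namely the space of gradient fields. Testing the Nehari fibration along this kernel will drive the Nehari level of $\mathcal{I}$ down to zero, while every nontrivial critical point has strictly positive energy.

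\emph{Nehari identity.} If $E\in H\cap L^{2q}(\R^3;\R^3)$ is a nontrivial critical point, testing $\mathcal{I}'(E)[E]=0$ gives
\[
  \int_{\R^3}|\nabla\times E|^2+|E|^2\,\dx=\int_{\R^3}|E|^{2q}\,\dx,\qquad \mathcal{I}(E)=\Bigl(\tfrac{1}{2}-\tfrac{1}{2q}\Bigr)\|E\|_{L^{2q}}^{2q}>0.
\]

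\emph{Testing with gradient fields.} For any nonconstant $\phi\in C_c^\infty(\R^3;\R)$ and $t>0$, since $\nabla\times\nabla\phi=0$,
\[
  \mathcal{I}(t\nabla\phi)=\tfrac{t^{2}}{2}\|\nabla\phi\|_{2}^{2}-\tfrac{t^{2q}}{2q}\|\nabla\phi\|_{2q}^{2q},
\]
which is maximised at $t_\ast^{2q-2}=\|\nabla\phi\|_{2}^{2}/\|\nabla\phi\|_{2q}^{2q}$, placing $t_\ast\nabla\phi$ on the Nehari set $\mathcal{N}:=\{E\ne 0:\mathcal{I}'(E)[E]=0\}$. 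The rescaling $\phi_\lambda(x):=\phi(x/\lambda)$ produces $\|\nabla\phi_\lambda\|_{2}^{2}=\lambda\|\nabla\phi\|_{2}^{2}$ and $\|\nabla\phi_\lambda\|_{2q}^{2q}=\lambda^{3-2q}\|\nabla\phi\|_{2q}^{2q}$, whence $t_\ast$ scales linearly in $\lambda$ and the peak value scales like $\lambda^{3}$. Letting $\lambda\to 0^{+}$ yields $\inf_{\mathcal{N}}\mathcal{I}=0$.

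\emph{Contradiction.} A Nehari/min--max identification tailored to this strongly indefinite setting would give $c_{gs}:=\inf\{\mathcal{I}(E):E\neq 0,\ \mathcal{I}'(E)=0\}=\inf_{\mathcal{N}}\mathcal{I}=0$; together with the first step this forces $\mathcal{I}(E^{\ast})=0>0$ for any putative ground state $E^{\ast}$, a contradiction.

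The main obstacle is precisely this last identification. The usual Nehari principle, valid when the quadratic part of $\mathcal{I}$ is coercive, fails here because $\int|\nabla\times E|^{2}\dx$ vanishes on the infinite-dimensional space of gradient fields. One must replace it by a deformation / linking construction adapted to the curl--curl kernel, as in \cite{BaDoPlRe_GroundStates}, showing that every value strictly below $\inf_{\mathcal{N}}\mathcal{I}+\varepsilon$ is reached at a nontrivial critical point for every $\varepsilon>0$ --- thereby yielding $c_{gs}\le\inf_{\mathcal{N}}\mathcal{I}=0$ and completing the argument.
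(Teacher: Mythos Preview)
Your argument has a genuine gap, and it is exactly the one you flag yourself: from $\inf_{\mathcal N}\mathcal I=0$ you cannot conclude anything about $c_{gs}:=\inf\{\mathcal I(E):E\neq 0,\ \mathcal I'(E)=0\}$ beyond the trivial bound $c_{gs}\geq \inf_{\mathcal N}\mathcal I=0$ (since every critical point lies on $\mathcal N$). Nothing in your computation rules out, say, $c_{gs}=5$ with the infimum attained. The deformation/linking machinery you allude to would have to manufacture actual critical points at arbitrarily small positive levels, and you do not carry this out.

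The paper sidesteps this difficulty entirely by a much more elementary observation: one can write down \emph{explicit solutions} with energy tending to zero. Take $E_j(x):=\ind_{|x|<1/j}\,\frac{x}{|x|}$. Then $E_j=\nabla\Phi_j$ with $\Phi_j(x)=\min\{|x|,1/j\}\in H^1_{\loc}(\R^3)$, so $\nabla\times E_j=0$. Moreover $|E_j(x)|\in\{0,1\}$ pointwise, hence $|E_j|^{2q-2}E_j=E_j$ and the equation $\nabla\times\nabla\times E_j+E_j=|E_j|^{2q-2}E_j$ is satisfied in the weak sense. Thus each $E_j$ is a genuine nontrivial critical point, not merely a Nehari point, and $c_{gs}\leq \mathcal I(E_j)=\big(\tfrac12-\tfrac{1}{2q}\big)|B_{1/j}|\to 0$. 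Combined with your first step ($\mathcal I(E)>0$ for every nontrivial critical point), the infimum $c_{gs}=0$ is never attained and no ground state exists.

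The missing idea, then, is that among gradient fields one should not pick a generic $\nabla\phi$ and rescale, but rather choose one whose modulus takes only the values $0$ and $1$, so that the nonlinearity acts trivially and the field is automatically a solution. This replaces your unproved critical-point-theoretic step by a one-line verification.
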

  
  This is surprising given that the existence of ground state solutions can be proved for
  the slightly different model where $\nabla\times  \nabla\times E + E$ is replaced by 
  $\nabla\times\nabla\times E$ and the nonlinearity is $\min\{|E|^p,|E|^q\}$ for $2<p<6<q<\infty$,
  see~\cite{Med_GS}. One may check  that the proof of Proposition~\ref{prop:Local} does not carry over to 
  this case. We also mention the cylindrically symmetric approaches from
  \cite{AzzBenDApFor_Static,DApSic_Magnetostatic,BaDoPlRe_GroundStates,HirRei_Cylindrical} where
  divergence-free solutions with higher energy are constructed. Other prototypical nonlinear Maxwell equations
  will be commented on in Remark~\ref{rem:prop}.

  \medskip
   
   Motivated by Proposition~\ref{prop:Local} our aim is to identify nonlocal variants of these nonlinear
   models that admit ground state solutions, thus overcoming the lack of local compactness in the gradient
   part of~\eqref{eq:Local}. In several physically relevant models, a nonlocal nonlinear effect 
   is given in terms of a rapidly decaying convolution kernel $K:\R^3\to\R$ that quantifies the
   dependency of the nonlinear refractive index at a given point on the intensity of the electric field over
   a small neighbourhood.
   The kernels are typically supposed to decay rapidly at infinity. Typical choices are
   given by (see \cite[Section IV]{KrolBang})
   $$
     K_1(x)=e^{-|x|^2},\qquad K_2(x)=e^{-|x|},\qquad  K_3(x)= \ind_{|x|<R},
   $$ 
   while the latter is usually considered as a toy model allowing for explicit computations. In some cases  
   oscillatory kernel functions are considered as well~\cite{KrolBang2}.  To study the impact of such
   regularizations for the existence of ground states we first consider some partially nonlocal version
   of the cubic Kerr nonlinearity given by $(K\ast |E|^2)E$ instead of $|E|^2E$. Here, $K\ast |E|^2$
   represents a  nonlocal nonlinear refractive index change of the propagation medium.  This phenomenological
   model is particularly popular in the study of laser beams modeled by nonlinear Schr\"odinger equations,
   which in turn serve as reduced models for Maxwell's Equations \cite{KrolBang}.  
   It has the advantage of being variational, so  it makes sense to look for ground state solutions.
   However,   we show that this particular model admits ground states only under
   artificial assumptions on the kernel function $K$. In particular, we will see that $K_1,K_2$ do not admit
   ground states whereas   $K_3$ has such solutions. To be more precise, we
   introduce the associated energy functional $I:H^1(\curl;\R^3)\to\R$ via
  \begin{align} \label{eq:defI}
   I(E)
   &:= \frac{1}{2} \int_{\R^3} |\nabla\times E|^2 + |E|^2 \,dx - \frac{1}{4} \int_{\R^3} (K\ast |E|^2)\,|E|^2
   \,dx
 \end{align}
  where $K\in L^\infty(\R^3)$. To prove the existence of ground states the typical strategy is to
  minimize $I$ over the Nehari manifold $\mathcal N = \{ E\in H : I'(E)[E]=0, E\neq 0 \}$. The following result shows that
  this approach fails in most cases.

  \begin{thm}\label{thm:ENonlocal}
    Let $K\in L^\infty(\R^3)$ be almost everywhere continuous with $K(z)\to K(0)= \sup_{\R^3}K>0$ as $|z|\to
    0$.  Then we have
      $\inf_{\mathcal N} I 
      =    \frac{1}{4K(0)}$
    and a minimizer for $\inf_{\mathcal N} I$ exists if and only if $\delta_K:= \sup\{\delta: K(z)=K(0) \text{
    for }|z|<\delta\}$ is positive. In this case the set of minimizers consists of all gradient vector fields
    $E=\nabla \Phi\in\mathcal N$ with $\diam(\supp(E))\leq \delta_K$. Every such minimizer is a ground state
    solution of
    \begin{equation} \label{eq:ENonlocal}
    \nabla\times \nabla\times E +   E = \big(K\ast |E|^2\big)\, E \qquad\text{in }\R^3.
   \end{equation}
  \end{thm}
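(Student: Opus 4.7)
The plan is to combine the pointwise bound $K\le K(0)$ with the Nehari identity to obtain a sharp lower bound $I(E)\ge 1/(4K(0))$, to build a minimizing sequence by concentration for the matching upper bound, and then to read off the characterization of minimizers from the equality cases.

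For the lower bound: on $\mathcal N$ one has $I(E)=\tfrac14\|E\|_H^2$ with $\|E\|_H^2:=\int_{\R^3}(|\nabla\times E|^2+|E|^2)\,dx$, so combining the Nehari identity with $K\le K(0)$ and $\|E\|_2\le\|E\|_H$ yields
\begin{equation*}
  \|E\|_H^2=\int_{\R^3}(K\ast|E|^2)|E|^2\,dx\le K(0)\|E\|_2^4\le K(0)\|E\|_H^4,
\end{equation*}
hence $\|E\|_H^2\ge 1/K(0)$ and $\inf_{\mathcal N} I\ge 1/(4K(0))$. For the matching upper bound I would fix $\Phi_0\in C_c^\infty(\R^3)$ with $\nabla\Phi_0\not\equiv 0$, consider the concentrating gradient fields $E_\lambda(x):=t_\lambda\lambda(\nabla\Phi_0)(\lambda x)$, and choose $t_\lambda>0$ so that $E_\lambda\in\mathcal N$. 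A change of variables produces
\begin{equation*}
  \int_{\R^3}(K\ast|E_\lambda|^2)|E_\lambda|^2\,dx=t_\lambda^4\lambda^{-2}\iint_{\R^3\times\R^3} K((u-v)/\lambda)\,|\nabla\Phi_0(u)|^2|\nabla\Phi_0(v)|^2\,du\,dv,
\end{equation*}
and since $K((u-v)/\lambda)\to K(0)$ pointwise while $|K|\le K(0)\in L^\infty$, dominated convergence combined with the Nehari relation yields $I(E_\lambda)\to 1/(4K(0))$.

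Next I would track the equality cases. A minimizer $E\in\mathcal N$ saturates both inequalities above: $\|E\|_2=\|E\|_H$ forces $\nabla\times E=0$, so $E=\nabla\Phi$ is a gradient field; and $\int(K\ast|E|^2)|E|^2\,dx=K(0)\|E\|_2^4$ reads $(K(0)-K(x-y))|E(x)|^2|E(y)|^2=0$ a.e., i.e.\ $K\equiv K(0)$ on $\supp E-\supp E$, which by the definition of $\delta_K$ amounts to $\diam(\supp E)\le\delta_K$. Conversely, whenever $\delta_K>0$, any gradient field $E=\nabla\Phi\in\mathcal N$ with $\diam(\supp E)\le\delta_K$ realizes the infimum: the support condition forces $K\ast|E|^2\equiv K(0)\|E\|_2^2$ on $\supp E$, so the Nehari identity becomes $\|E\|_2^2=1/K(0)$ and $I(E)=1/(4K(0))$. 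That every such $E$ solves \eqref{eq:ENonlocal} weakly, and is therefore a ground state, follows from the natural-constraint argument: writing $\Psi(E):=I'(E)[E]$, one computes $\Psi'(E)[E]=-2\|E\|_H^2\ne 0$ on $\mathcal N$, so the Lagrange identity $I'(E)=\mu\Psi'(E)$ forces $\mu=0$ after testing with $E$; and since every nontrivial solution of \eqref{eq:ENonlocal} lies in $\mathcal N$, none has energy below $1/(4K(0))$.

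The main delicate point is the ``only if'' direction: when $\delta_K=0$, one must rule out any nontrivial $E\in L^2(\R^3;\R^3)$ saturating the estimate. Here I would invoke Steinhaus' theorem --- the difference $\supp E-\supp E$ contains a neighborhood of the origin --- together with the a.e.\ continuity of $K$ to extract a set of positive Lebesgue measure within this difference on which $K<K(0)$, producing the strict inequality $\int(K\ast|E|^2)|E|^2\,dx<K(0)\|E\|_2^4$ and thereby excluding minimizers.
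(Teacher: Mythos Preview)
Your argument is correct and follows essentially the same route as the paper: both obtain the lower bound from the two inequalities $K\le K(0)$ and $\|E\|_2\le\|E\|_H$, construct the matching upper bound via concentrating gradient fields, and read off the characterization of minimizers from the equality cases (the paper packages this via the fibering quotient $I_L(E)^2/4I_{NL}(E)$, while you work directly with $I=\tfrac14\|E\|_H^2$ on~$\mathcal N$, but this is cosmetic). One small slip: in the dominated-convergence step you should bound $|K|$ by $\|K\|_\infty$ rather than by $K(0)$, since $K$ is not assumed nonnegative.
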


  
  From a mathematical point of view it is interesting to see what happens for other
  nonlinearities in the nonlocal term. We have a look at the case of other power-type nonlinearities 
  where $|E|^2$ is replaced by $|E|^q$. The case $q>2$ will be excluded because the assumption $E\in H:=
  H^1(\curl;\R^3)$ does not imply $E\in L^q_{\loc}(\R^3;\R^3)$, so the Euler functional is in general
  not well-defined on $H^1(\curl;\R^3)$. In fact, we will see in Remark~\ref{rem:Thmq}~(c)
  that for reasonable kernel functions (such as $K_1,K_2,K_3$) and $q>2$ ground states cannot be constructed
  via minimization over the Nehari manifold even if the latter is intersected with $L^q(\R^3;\R^3)$.  
  For this reason we concentrate on the case $q\in (1,2)$ and define the corresponding energy as follows:   
  \begin{align*}
   I_q(E)
   &= \frac{1}{2} \int_{\R^3} |\nabla\times E|^2+  |E|^2 \,dx 
   -  \frac{1}{2q} \int_{\R^3} \big(K\ast |E|^q\big)\, |E|^q \,dx  
 \end{align*}
  The Nehari manifold is then given by $\mathcal N_q := \{E\in H^1(\curl;\R^3): I_q'(E)[E]=0, E\neq 0\}$. 
  Our next result shows that there are ground states for each of the kernels $K_1,K_2,K_3$, which is in  
  contrast with Theorem~\ref{thm:ENonlocal}. We will assume $K\in L^{1/(2-q),\infty}(\R^3)$ to have a
  well-defined functional $I_q:L^{2/q}(\R^3)\to\R$. Here, $L^{p,s}(\R^3)$ denotes the standard Lorentz space,
  which in the case $s=\infty$ is also called  weak Lebesgue space or Marcinkiewicz space. Moreover, we assume $K$ to be Schwarz-symmetric, i.e., $K$ coincides with its
  spherical rearrangement \cite[Chapter~3]{LiebLoss}. The common shorthand notation for this   is $K=|K|^*$.
  These assumptions are satisfied if $K$ is a nonnegative radially nonincreasing function satisfying $0\leq
  K(z) \leq C|z|^{-3(2-q)}$ for some $C>0$ and almost all $z\in\R^3$, which holds for $K_1,K_2,K_3$.
  
  \begin{thm}\label{thm:ENonlocalq}
    Assume $1<q<2$, $K\in L^{1/(2-q),\infty}(\R^3)$, $K=|K|^*\not\equiv 0$ and 
    $K(\cdot+h)\to K$ in $L^{1/(2-q),\infty}(\R^3)$ as $|h|\to 0$.
    Then $\inf_{\mathcal N_q} I_q$ is attained at some ground state solution $E\in H^1(\curl;\R^3)$ of 
    $$
      \nabla\times \nabla\times E + E = \big(K\ast |E|^q\big)\, |E|^{q-2}E \qquad\text{in  }\R^3.
    $$
    All ground state solutions are irrotational and one of them is given by 
    $E(x)=t\frac{x}{|x|}f(x)^{1/q}$ where $t\neq 0$ and $f$ is a Schwarz-symmetric
    maximizer of the functional $Q(f)=\int_{\R^3} (K\ast f)f\,dx$ over the unit sphere in $L^{2/q}(\R^3)$.   
  \end{thm}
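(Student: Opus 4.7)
A direct computation shows that for every $E\in H\setminus\{0\}$ with $Q(|E|^q)>0$ --- where $Q(f):=\int_{\R^3}(K\ast f)\,f\,dx$ --- the ray $\{tE:t>0\}$ meets $\mathcal N_q$ in exactly one point, and evaluating $I_q$ there gives
\[
\inf_{\mathcal N_q} I_q \;=\; \frac{q-1}{2q}\,J^{-1/(q-1)}, \qquad J\;:=\;\sup_{E\in H\setminus\{0\}} \frac{Q(|E|^q)}{\|E\|_H^{2q}}.
\]
Hence the plan is to reduce the existence of ground states to attainment of $J$.

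\textbf{Step 2 (Symmetrization to a scalar problem).} Given $E\in H$, let $u:=|E|^\ast\in L^2(\R^3)$ be the Schwarz rearrangement of $|E|$ and set $\tilde E(x):=(x/|x|)\,u(x)$. Approximating $u$ by smooth Schwarz-symmetric functions one verifies that $\tilde E$ coincides distributionally with $\nabla\Phi$ where $\Phi(x):=\int_0^{|x|}u(s)\,ds$; hence $\nabla\times \tilde E = 0$ and $\|\tilde E\|_H=\|u\|_2=\|E\|_2\leq \|E\|_H$. Riesz's rearrangement inequality combined with $K=|K|^\ast$ gives $Q(|\tilde E|^q)=Q((|E|^q)^\ast)\geq Q(|E|^q)$. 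Substituting $f:=u^q\in L^{2/q}(\R^3)$ therefore reduces $J$ to
\[
J \;=\; \sup\bigl\{\,Q(f)\,:\ f\geq 0,\ f=f^\ast,\ \|f\|_{2/q}=1\,\bigr\}.
\]

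\textbf{Step 3 (Existence of a scalar maximizer).} Take a maximizing sequence $f_n$ of Schwarz-symmetric nonnegative functions with $\|f_n\|_{2/q}=1$. By reflexivity of $L^{2/q}(\R^3)$ (here the assumption $q<2$ enters) one may assume $f_n\wto f$ weakly, and Schwarz symmetry is preserved in the limit. The crux is to show $Q(f_n)\to Q(f)$. By the weak Young inequality in Lorentz spaces, $f\mapsto K\ast f$ maps $L^{2/q}(\R^3)$ boundedly into $L^{2/(2-q)}(\R^3)$. I would then combine the Strauss-type pointwise decay $|f_n(x)|\les \|f_n\|_{2/q}\,|x|^{-3q/2}$ for Schwarz-symmetric functions with the assumed translation continuity $K(\cdot+h)\to K$ in $L^{1/(2-q),\infty}(\R^3)$ to upgrade the weak convergence to strong convergence of $K\ast f_n$ in $L^{2/(2-q)}(\R^3)$; pairing this with $f_n\wto f$ yields $Q(f_n)\to Q(f)$. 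A scaling argument using the $2$-homogeneity of $Q$ together with $J>0$ finally forces $\|f\|_{2/q}=1$, so $f$ realizes $J$.

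\textbf{Step 4 (Recovery and irrotationality).} Setting $E_0:=(x/|x|)\,f^{1/q}$ and scaling by the unique $t\neq 0$ with $tE_0\in\mathcal N_q$ produces the ground state of the announced form. Because $\nabla\times E = 0$, the equation $\nabla\times\nabla\times E+E=(K\ast|E|^q)|E|^{q-2}E$ collapses on $\{E\neq 0\}$ to $(K\ast|E|^q)|E|^{q-2}=1$, which, after the rescaling $f=|E|^q/|t|^q$, is exactly the Lagrange multiplier identity for the constrained maximizer $f$ (the multiplier being absorbed into $t$). Conversely, if $E\in\mathcal N_q$ realizes $\inf I_q$ then the chain of inequalities in Step~2 must be saturated at $E$; equality in $\|E\|_H\geq\|E\|_2$ forces $\nabla\times E=0$, proving irrotationality of every ground state. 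The main obstacle is Step~3: since convolution with $K$ is translation-invariant and therefore non-compact on $L^{2/q}(\R^3)$, the strong $L^{2/(2-q)}$-convergence of $K\ast f_n$ is delicate and relies essentially on the combination of Schwarz decay with the Lorentz-continuity hypothesis on $K$.
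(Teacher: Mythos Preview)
Your overall architecture matches the paper's: Nehari reduction, then the scalar problem $\sup_S Q$ on the $L^{2/q}$-sphere, then the explicit irrotational field $E=t\,(x/|x|)\,f^{1/q}$, with irrotationality of every ground state read off from equality in $\|E\|_H\ge\|E\|_2$. The substantive difference is how the maximizer of $Q$ is obtained. The paper does \emph{not} symmetrize first; it proves a stand-alone lemma (its Lemma~8) that $Q$ attains its supremum over the whole $L^{2/q}$-sphere via Lions' concentration--compactness: vanishing and dichotomy are ruled out separately, and in the compactness alternative local convergence of $K\ast f_j$ is obtained from a Fr\'echet--Kolmogorov argument using the translation-continuity hypothesis on $K$. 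Only afterwards is Riesz' rearrangement invoked to select a Schwarz-symmetric maximizer. Your route---symmetrize first, then exploit the Strauss bound $f_n(x)\le C|x|^{-3q/2}$ for radially decreasing $f_n$---is a legitimate alternative that sidesteps the full concentration--compactness machinery; in particular you never have to discuss dichotomy or introduce translated subsequences.

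What your sketch still owes is precisely the tail estimate that turns local into global strong convergence of $K\ast f_n$: you need $\sup_n\|K\ast f_n\|_{L^{2/(2-q)}(\{|y|>R\})}\to 0$ as $R\to\infty$. This can be closed by writing $K=K_M+(K-K_M)$ with $K_M$ bounded and compactly supported, handling $(K-K_M)\ast f_n$ by weak Young's inequality, and using the Strauss bound to get $|K_M\ast f_n(y)|\le C_M\,|y|^{-3q/2}$ for large $|y|$; this decay lies in $L^{2/(2-q)}(\{|y|>R\})$ exactly because $q>1$. You correctly flagged Step~3 as the crux but did not supply this piece. Once it is in place your argument is complete and somewhat shorter than the paper's, at the cost of using the Schwarz symmetry of $K$ from the outset---the paper's lemma is formulated for general matrix-valued kernels and is reused in its Theorem~4.
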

  

  Irrotational vector fields satisfy $\nabla\times E=0$ in $\R^3$,
  so there is a potential $\Phi:\R^3\to\R$ such that  $E=\nabla\Phi$. Such electric fields do not generate a 
  magnetic field in view of \eqref{eq:Maxwell-time}. This is different for the nontrivial cylindrically
  symmetric solutions of these equations found in
  \cite{DApSic_Magnetostatic,BaDoPlRe_GroundStates,HirRei_Cylindrical} that are divergence-free and hence do
  not satisfy $\nabla\times E=0$. The ground states obtained by Mederski~\cite{Med_GS}
  do not have this property either, so both types of solutions come with a nontrivial magnetic field.
   
  \medskip

    
  Given the importance of the cubic Kerr nonlinearity for nonlinear optics, we discuss another
  nonlocal model that admits nontrivial solutions also in the case of a cubic nonlinearity. This model
  originates from a description of the polarization field $\mathcal P(x,t)=P(x)e^{i\omega t}$
  via $P= K\ast (|E|^2 E)$ instead of $P= E (K\ast |E|^2)$. In the linear case such fully nonlocal models are 
  investigated  both from  a physical and mathematical point of view~\cite{GoffiPlum,Schoonover,Kinsler}.
  Our aim is to show that such models often admit some sort of ground state solution, which stands in 
  contrast to Theorem~\ref{thm:ENonlocal}. 
  It even allows to treat all power-type nonlinearities
  $|E|^{2q-2}E$ with $1<q<\infty$ under appropriate assumptions on the convolution kernel~$K$. The following
  result applies to each of the kernels $K_1,K_2,K_3$ introduced above. 

  
  \begin{thm} \label{thm:Nonlocal} 
    Assume $1<q<\infty, K\in L^{q,\infty}(\R^3)$, $K(\cdot+h)\to K$ in $L^{q,\infty}(\R^3)$ as $|h|\to 0$
    and $\int_{\R^3} (K\ast f)f\,dx>0$ for some $f\in L^{(2q)'}(\R^3)$.  
    Then there is a dual ground state solution $E\in L^{2q}(\R^3;\R^3)$ for 
  \begin{align}\label{eq:FullyNonlocal}
    \nabla\times \nabla\times E +  E = K\ast (|E|^{2q-2}E) \qquad\text{in }\R^3. 
  \end{align}
  \end{thm}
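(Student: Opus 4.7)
The plan is to apply a dual variational method. Introduce $F := |E|^{2q-2}E$, which yields a bijection between $L^{2q}(\R^3;\R^3)$ and $L^{(2q)'}(\R^3;\R^3)$ (where $(2q)' = 2q/(2q-1)\in(1,2)$) with inverse $E = |F|^{(2q)'-2}F$, and consider the dual functional
\[
 J(F) := \frac{1}{(2q)'}\int_{\R^3}|F|^{(2q)'}\,dx - \frac{1}{2}\int_{\R^3}(K\ast F)\cdot \mathcal{L}^{-1}F\,dx,\qquad \mathcal{L}:=\nabla\times\nabla\times + \id.
\]
Here $\mathcal{L}^{-1}$ acts as the identity on curl-free fields and as the Bessel resolvent $(-\Delta+\id)^{-1}$ on divergence-free fields, and hence is bounded on $L^p(\R^3;\R^3)$ for every $1<p<\infty$. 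Young's convolution inequality in Lorentz spaces gives $K\ast F\in L^{2q}$ for $K\in L^{q,\infty}$ and $F\in L^{(2q)'}$, so $J\in C^1(L^{(2q)'}(\R^3;\R^3);\R)$. Since $\mathcal{L}^{-1}$ and $K\ast(\cdot)$ are commuting self-adjoint Fourier multipliers (using evenness of $K$, or working with its symmetrization), a direct computation shows that critical points $F$ of $J$ correspond, via $E := \mathcal{L}^{-1}(K\ast F)\in L^{2q}(\R^3;\R^3)$, to solutions of \eqref{eq:FullyNonlocal}.

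Next I would minimize $J$ over the Nehari manifold
\[
 \mathcal{N}^* := \bigl\{F\in L^{(2q)'}(\R^3;\R^3)\setminus\{0\} : \|F\|_{(2q)'}^{(2q)'} = Q^*(F)\bigr\},\qquad Q^*(F) := \int_{\R^3}(K\ast F)\cdot \mathcal{L}^{-1}F\,dx.
\]
Non-emptiness uses the positivity hypothesis: $\int(K\ast f)f\,dx>0$ for some scalar $f$ forces $\hat K$ to be positive on a set of positive Lebesgue measure, and for $F=\nabla\phi$ with $\hat\phi$ supported there, $F$ is curl-free (so $\mathcal{L}^{-1}F=F$) and $Q^*(F)>0$; because $(2q)'<2$, the fibre map $t\mapsto J(tF)=\frac{t^{(2q)'}}{(2q)'}\|F\|_{(2q)'}^{(2q)'}-\frac{t^2}{2}Q^*(F)$ admits a unique positive maximizer, producing an element of $\mathcal{N}^*$. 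On $\mathcal{N}^*$ one has $J(F)=\bigl(\tfrac{1}{(2q)'}-\tfrac12\bigr)\|F\|_{(2q)'}^{(2q)'}$, and continuity of $Q^*$ yields a uniform positive lower bound on $\|F\|_{(2q)'}$, so the infimum $c := \inf_{\mathcal{N}^*} J$ is strictly positive.

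The main obstacle is compactness: translation invariance of $J$ may cause a minimizing sequence to escape to infinity. Given $(F_n)\subset\mathcal{N}^*$ with $J(F_n)\to c$, I would employ a concentration--compactness argument. Vanishing in the sense of local $L^{(2q)'}$-averages is incompatible with $Q^*(F_n)=\|F_n\|_{(2q)'}^{(2q)'}\geq c_0>0$, since decay of $K\in L^{q,\infty}$ would force $Q^*(F_n)\to 0$ in that scenario; after translating $F_n\mapsto F_n(\cdot-y_n)$ one extracts a nontrivial weak limit $F_0\wto F_0\neq 0$ in $L^{(2q)'}$. A Brezis--Lieb-type splitting $F_n = F_0 + G_n$ with $G_n\wto 0$ then yields
\[
 \|F_n\|_{(2q)'}^{(2q)'} = \|F_0\|_{(2q)'}^{(2q)'}+\|G_n\|_{(2q)'}^{(2q)'}+o(1),\qquad Q^*(F_n) = Q^*(F_0)+Q^*(G_n)+o(1),
\]
where the splitting of $Q^*$ relies crucially on the translation continuity $K(\cdot+h)\to K$ in $L^{q,\infty}$ to eliminate the cross terms. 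Comparing these identities with the Nehari constraint and the minimality of $c$ forces $G_n\to 0$ in $L^{(2q)'}$, so $F_n\to F_0$ strongly, $F_0\in\mathcal{N}^*$ and $J(F_0)=c$. Setting $E := \mathcal{L}^{-1}(K\ast F_0) \in L^{2q}(\R^3;\R^3)$ produces the claimed dual ground state solution of \eqref{eq:FullyNonlocal}. The decisive technical point throughout is the Lorentz-norm translation continuity of $K$, which is what makes the splitting rigid enough to close the compactness argument.
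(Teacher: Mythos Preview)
Your overall strategy coincides with the paper's: a dual variational formulation via $U=|E|^{2q-2}E\in L^{(2q)'}$, inversion of $\mathcal L=\nabla\times\nabla\times+\id$ through the Helmholtz decomposition, and minimization over the dual Nehari manifold by concentration--compactness. The paper merges $K\ast(\cdot)$ and $\mathcal L^{-1}$ into a single matrix-valued convolution kernel $\mathcal K$ (with $\widehat{\mathcal K}(\xi)=\hat K(\xi)\bigl[(1+|\xi|^2)^{-1}(I-R(\xi))+R(\xi)\bigr]$, $R(\xi)=|\xi|^{-2}\xi\xi^T$), shows via Mikhlin that $\mathcal K$ inherits the $L^{q,\infty}$-regularity and translation continuity of $K$, and then reduces the Nehari problem to maximizing $Q(U)=\int_{\R^3}(\mathcal K\ast U)\cdot U\,dx$ over the unit sphere of $L^{(2q)'}$.

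There is, however, a genuine gap in your compactness step. The Brezis--Lieb identity $\|F_n\|_{(2q)'}^{(2q)'}=\|F_0\|_{(2q)'}^{(2q)'}+\|G_n\|_{(2q)'}^{(2q)'}+o(1)$ requires pointwise a.e.\ convergence $F_n\to F_0$ along a subsequence; weak convergence in $L^{(2q)'}$ alone does not suffice, and your sequence carries no extra regularity from which a.e.\ convergence could be extracted (there is no Rellich-type embedding available here). You also misplace the role of the translation-continuity hypothesis: the cross terms in the bilinear expansion $Q^*(F_0+G_n)=Q^*(F_0)+2B(F_0,G_n)+Q^*(G_n)$ vanish simply because $G_n\rightharpoonup 0$ and $B(F_0,\cdot)$ is a bounded linear functional on $L^{(2q)'}$---no hypothesis on $K$ is needed there. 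In the paper the translation continuity of $K$ (hence of $\mathcal K$) is used for a different purpose: via the Fr\'echet--Kolmogorov--Riesz criterion it yields that $U\mapsto\mathcal K\ast U$ is \emph{compact} from $L^{(2q)'}(\R^3;\R^3)$ into $L^{2q}(B;\R^3)$ for every bounded $B$. After ruling out vanishing and dichotomy (the latter through the strict subadditivity $\lambda^{2/(2q)'}+(1-\lambda)^{2/(2q)'}<1$ for $\lambda\in(0,1)$, which your sketch leaves implicit), Lions' compactness alternative gives tightness of $|F_n|^{(2q)'}$; then $\mathcal K\ast F_n\to\mathcal K\ast F_0$ strongly in $L^{2q}_{\loc}$ paired with $F_n\rightharpoonup F_0$ passes to the limit in $Q$ directly, and weak lower semicontinuity of the norm closes the argument without any Brezis--Lieb splitting. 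Your plan can be repaired along these lines, but as written the Brezis--Lieb step does not go through.
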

  
  Since \eqref{eq:FullyNonlocal} is not variational, the notion of a ground state does
  not make sense. A dual ground state is a function $E$ given by
  $|E|^{2q-2}E=U$ where $U$ is a ground state for the associated dual functional $J:L^{(2q)'}(\R^3;\R^3)\to\R$
  that we will introduce in~\eqref{eq:functionalJdual}. This functional is defined in such a way that any
  critical point $U$ of $J$ gives rise to a distributional solution $E$ of~\eqref{eq:FullyNonlocal}. In the
  Appendix we will motivate why such dual ground states may be interpreted as reasonable substitutes for
  ground states. 
  In each of our results one can deduce better  integrability and regularity
  properties of the constructed solutions under suitable assumptions on the kernel function $K$  using
  classical bootstrap arguments. Clearly, it would be of interest to know about further qualitative properties
  of ground states  like their symmetries, monotonicity and positivity properties. We believe it to be
  particularly interesting whether ground states are unique and irrotational.
  
  \section{Proof of Proposition~\ref{prop:Local}}
  
  We have to show that the equation $\nabla\times\nabla\times E + E = |E|^{2q-2}E$ in $\R^3$ does not have
  ground state solutions. To this end define $E_j(x):= \ind_{|x|<1/j} \frac{x}{|x|}$ for $j\in\N$. Then
  $E_j\in H^1(\curl;\R^3) \cap L^{2q}(\R^3;\R^3)$ is a weak solution of \eqref{eq:Local} satisfying $\nabla\times
     E_j=0$ in the weak sense. The latter is true because of $E_j = \nabla\Phi_j$ where $\Phi_j\in
     H^1_{\loc}(\R^3)$ is given by $\Phi_j(x):= \min\{|x|,\frac{1}{j}\}$.     
     From $I(E_j)\to 0$ as $j\to\infty$ we deduce that a ground state can only
     satisfy $I(E)=0$ if it exists. On the other hand, $I'(E)=0,E\neq 0$ gives $I(E)=I(E)-\frac{1}{2q}I'(E)[E]
     = \frac{q-1}{2q}\int_{\R^3} |E|^{2q}\,dx >0$, so there is no nontrivial solution at the energy level
     zero. Hence, a ground state cannot exist. \qed
   
   \medskip  
  \newpage
  
  \begin{rem} \label{rem:prop}~
    \begin{itemize}
      \item[(a)] In \cite[Theorem~1.1]{BaDoPlRe_GroundStates} the authors claim that all distributional
    solutions  $E(x)=\frac{x}{|x|}s(|x|)$ of the equation $\nabla\times\nabla\times E + E =
    |E|^{2q-2}E$ are given by arbitrary measurable functions $s:(0,\infty)\to \{-1,1\}$. However, this is
    only a subfamily of all such distributional solutions. The correct version of this result states
    that all distributional solutions  $E(x)=\frac{x}{|x|}s(|x|)$ are given by arbitrary measurable functions    
    $s:(0,\infty)\to \{-1,0,1\}$. This small modification is important, because we exploit a shrinking of supports in our proof above by choosing
    $s(|x|)=\ind_{|x|<1/j}$. In view of \cite[Theorem~1.1]{BaDoPlRe_GroundStates} Proposition~\ref{prop:Local}
    generalizes to other nonlinear Maxwell equations like  $\nabla\times\nabla\times E + E = g(|x|,|E|)E$ where shrinking solutions are given by 
    $E_j(x):= \ind_{\rho<|x|<\rho+1/j} a(|x|) \frac{x}{|x|}$ for any measurable function $a$ given by
    $g(r,a(r))=1$ for almost all $r\in (\rho,\rho+\frac{1}{j}), \rho>0$. 
    \item[(b)] Similarly, one obtains solutions with  compact support for $\nabla\times\nabla\times E - E =
    g(|x|,|E|)E$. In particular, $\nabla\times\nabla\times E - E = -|E|^{2q-2}E$ with $1<q<\infty$ has the
    solutions $E_j(x):= \ind_{0<|x|<j}  \frac{x}{|x|}$ and the associated energy 
    $$
      I(E_j)
      = \frac{1}{2} \int_{\R^3} |\nabla \times E_j|^2 - |E_j|^2\,dx + \frac{1}{2q} \int_{\R^3} |E_j|^{2q}\,dx 
      = - \frac{q-1}{2q} |\{x\in\R^3: 0<|x|<j\}|
    $$
    tends to $-\infty$ as $j\to\infty$. So ground states do not exist for these equations. 
    Moreover, $\nabla\times\nabla\times E + E = -|E|^{2q-2}E$ does not admit any nontrivial weak
    solution in $H^1(\curl;\R^3)\cap L^{2q}(\R^3;\R^3)$, which follows from testing the equation with $E$. The equation
    $\nabla\times\nabla\times E - E = |E|^{2q-2}E$ admits nontrivial cylindrically symmetric solutions for
    suitable exponents $q$ \cite[Theorem~3a]{Man_Uncountably}, but these solutions do not belong to
    $H^1(\curl;\R^3)$ due to slow decay rates at infinity. In the related case of Helmholtz
    equations  nonexistence results for $L^2$-solutions can be found in~\cite[Theorem~1a]{Kato}
    or~\cite[Theorem~3]{KoTa}. Under strong extra assumptions on $|E(x)|^{2q-2}$ the absence of
    $H^1(\curl;\R^3)$-solutions $E$ follows from \cite[Theorem~3]{CosMan}
    choosing $\mu(x):=1,\eps(x):=1-|E(x)|^{2q-2}$.  
    \item[(c)] The existence of discontinuous and concentrating solutions illustrates that no regularity theory and no compact embeddings
    in whatever Lebesgue space can be exploited in the analysis of the (local) nonlinear Maxwell
    equation~\eqref{eq:Local}. 
    \end{itemize}
  \end{rem} 
  
 \section{Proof of Theorem~\ref{thm:ENonlocal}}
    
  It is convenient to split the functional $I$ according to
   $I(E) = \frac{1}{2} I_L(E) - \frac{1}{4} I_{NL}(E)$ where 
 \begin{align*}
   I_L(E) 
    = \int_{\R^3}  |\nabla\times E|^2 + |E|^2 \,dx, \qquad  
   I_{NL} (E) 
   = \int_{\R^3}\int_{\R^3}K(x-y) |E(x)|^2|E(y)|^2  \,dx\,dy,
 \end{align*}
 see~\eqref{eq:defI}. 
 It is standard to show that $I$ is continuously differentiable
 on $H^1(\curl;\R^3)$ under the assumptions of Theorem~\ref{thm:ENonlocal} with Fr\'{e}chet derivative
 \begin{align*}
   I'(E)[\tilde E] 
   = \int_{\R^3} (\nabla\times E)\cdot (\nabla\times \tilde E)  +   E\cdot
   \tilde E \,dx 
    - \int_{\R^3}\int_{\R^3}K(x-y) \big( E(x)\cdot \tilde E(x)\big)\,|E(y)|^2 \,dx\,dy
 \end{align*}
 for $E,\tilde E\in H$. 
  Moreover,  critical points of $I$ are weak solutions of~\eqref{eq:ENonlocal}, so a ground state
  solution may be obtained by minimizing $I$ over the Nehari manifold 
  $\mathcal N = \{ E\in H : I'(E)[E]=0, E\neq 0 \}$. 
 We first provide a convenient min-max characterization of the least energy level $\inf_{\mathcal N} I$ with
 the aid of the fibering map $\gamma(t):= I(tE)$ for $E\in H\sm\{0\}$.
 The simple observation is that $tE\in\mathcal N$ holds for some $t\neq 0$ if and only if $\gamma'(t)=0$.
 Given the structure of $I$ it immediate to see that  $\gamma$ has a unique positive maximizer if
 $I_{NL}(E)>0$. In the opposite case, $\gamma$ increases to $+\infty$ and does not have any critical point.
 This implies
 \begin{align} \label{eq:def_c}
   \inf_{\mathcal N} I
    = \inf_{E\in H\sm\{0\}} \sup_{t>0} I(tE)
    =   \inf_{E\in H, I_{NL}(E)>0} \frac{I_L(E)^2}{4I_{NL}(E)}.    
 \end{align}
 Note that $K(z)>0$ for small $|z|$ implies that $I_{NL}(E)>0$  holds for $E$ belonging to
 some nonempty open subset of $H$. In fact, one may take $E$ with support of sufficiently small diameter. We
 conclude that the Nehari manifold is non-void and it remains to analyze the expression on the right
 of~\eqref{eq:def_c}. 
  
 \medskip
 
 We first prove the formula for $\inf_{\mathcal N}J$. The lower bound is
 obtained as follows:
  \begin{align} \label{eq:lowerbound}
    \begin{aligned}
    \frac{I_L(E)^2}{4I_{NL}(E)}
    &\geq \frac{( \int_{\R^3}  
   |E(x)|^2  \,dx)^2}{4\int_{\R^3}\int_{\R^3} K(x-y) |E(x)|^2 |E(y)|^2  \,dx\,dy} \\
    &\geq \frac{(\int_{\R^3}  
   |E(x)|^2   \,dx)^2}{4K(0) \int_{\R^3}\int_{\R^3}  |E(x)|^2 |E(y)|^2 \,dx\,dy} \\ 
    &= \frac{1}{4K(0)}.
   \end{aligned} 
  \end{align} 
  Here we used $K(z)\leq K(0)$ for all $z\in\R^3$.  
 On the other hand, choosing a nonconstant $\phi\in \mathring{H}^1(\R^3)$ such that $\nabla\phi$ has compact
 support and $\tilde E_n(x):=n^{3/2} \nabla \phi(nx)$,  we observe $\nabla\times
 \tilde E_n=0$ and $\tilde E_n\in L^2(\R^3;\R^3)$. In particular, $\tilde E_n\in H=H^1(\curl;\R^3)$.
 Moreover, the supports of $\tilde E_n$ shrink to $\{0\}$. Hence, $K(z)\to K(0)$ as $z\to 0$ implies
 \begin{align*}
   \frac{I_L(\tilde E_n)^2}{4I_{NL}(\tilde E_n)}
   &= \frac{(\int_{\R^3} |\tilde E_n(x)|^2 \,dx)^2}{4\int_{\R^3}\int_{\R^3}K(x-y)
    |\tilde E_n(x)|^2  |\tilde E_n(y)|^2  \,dx\,dy}   \\
   &= \frac{ (\int_{\R^3}  |\tilde E_n(x)|^2\,dx)^2}{4  \int_{\R^3}\int_{\R^3}
   (K(0)+o(1))  |\tilde E_n(x)|^2  |\tilde E_n(y)|^2 \,dx\,dy}   \\
   &= \frac{ (\int_{\R^3} |\nabla\phi(x)|^2  \,dx)^2}{4  \int_{\R^3}\int_{\R^3}
   K(0) |\nabla\phi(x)|^2 |\nabla\phi(y)|^2  \,dx\,dy + o(1)}   \\
   &= \frac{1}{4K(0)} +o(1) \quad\text{as }n\to\infty.
 \end{align*}
 This proves the formula for the infimum.
 
 \medskip
 
 Next we show that a minimzer exists if and only if $\delta_K>0$, i.e., if and only if $K(z)=K(0)$ for almost
 all $z\in\R^3$ such that $|z|<\delta$ where $\delta>0$. In fact, choose any nonconstant $\phi\in
 \mathring{H}^1(\R^3)$ such that the support of $\tilde E:= \nabla \phi\in H, \tilde E\in L^2(\R^3;\R^3)$ has
 diameter $\leq \delta$. Then $\nabla\times\tilde E=0$ implies $\tilde E\in H$ and both  inequalities in
\eqref{eq:lowerbound} become equality. 
Vice versa, if $E$ is a minimizer, then the estimates
from \eqref{eq:lowerbound} show that $\nabla\times E = 0$ holds
almost everywhere and  $K(x-y)=K(0)$ has to hold for almost all
$(x,y)\in \supp(E)\times\supp(E)$. Since $K$ is continuous almost everywhere, this implies $K(z)=K(0)$ for
almost all $z\in\R^3$ with $|z|\leq \diam(\supp(E))$, which is all we had to prove.  \qed

%

 \section{A Lemma}
   
   In this section we consider an auxiliary variational problem that will allow to deduce the existence 
   of solutions for the nonlocal nonlinear Maxwell equations that we discuss in Theorem~\ref{thm:ENonlocalq}
   and Theorem~\ref{thm:Nonlocal}. It shares some features with Lions' application of the
    concentration-compactness principle presented in \cite[Section II.1-2]{Lions_ConcComp}. Let $S:=\{f\in
   L^p(\R^N;\R^M):\|f\|_p=1\}$ denote the unit sphere. Our aim is to solve the maximization problem 
   $$ 
      \sup_{f\in S} Q(f)\qquad\text{where } Q(f):= \int_{\R^N} (\mathcal K\ast f) \cdot f\,dx 
   $$
   under appropriate assumptions on the kernel function $\mathcal K$. To prove the
   existence of a maximizer, we will use Lions' concentration-compactness method. So we consider a
   maximizing sequence  $(f_j)$ in $S$ and define the probability measures 
   $$
    \mu_j(B) :=  \int_B |f_j(x)|^p\,dx.
  $$
  According to \cite[Theorem~4.7.3]{KCChang}, see \cite[Lemma~I.1]{Lions_ConcComp} for the original result,
  this family of measures may behave in three possible ways:
  \begin{itemize}
    \item[(I)] (Compactness) There is a sequence $(x_j)\subset\R^N$ such that for all $\eps>0$ there is
    $R_\eps>0$ such that $\mu_j(B_{R_\eps}(x_j))\geq 1-\eps$.
    \item[(II)] (Vanishing) For all $R>0$ we have $\lim_{j\to\infty} \sup_{x\in\R^N} \mu_j(B_R(x))=0$.
    \item[(III)] (Dichotomy) There are $\lambda\in (0,1)$, a sequence $(R_j)$ with $R_j\to\infty$, a sequence
    $(x_j)\subset \R^N$ and measures $\mu_j^1,\mu_j^2$ such that $0\leq \mu_j^1+\mu_j^2\leq \mu_j$ and
    \begin{equation*}
      \supp(\mu_j^1) \subset B_{R_j}(x_j),\quad
      \supp(\mu_j^2) \subset \R^N\sm B_{2R_j}(x_j),\quad
      |\lambda-\mu_j^1(\R^N)| + |1-\lambda-\mu_j^2(\R^N)| \leq \frac{1}{j}.
    \end{equation*}
  \end{itemize}
  The aim is to show that (I) occurs and to derive the existence of a maximizer using some
  local compactness property of $f\mapsto \mathcal K\ast f$. The first step is to rule out the vanishing case
  (II). This is achieved with the aid of the following simple estimate.
  
  \begin{prop}\label{prop:nonvanishing}
    Assume $N,M\in\N, 1<p<2$ and $\mathcal K\in L^{p'/2,\infty}(\R^N;\R^{M\times M})$. Then we have
    for all $R>0$ $$
      Q(f)
      \leq \eps(R) \|f\|_p^2 + C(R) \|f\|_p^p  \left(\sup_{y\in\R^N}
      \|f\|_{L^p(B_R(y))}\right)^{2-p}
    $$
    with $\eps(R)\to 0$ as $R\to\infty$.
  \end{prop}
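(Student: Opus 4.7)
The plan is to decompose the kernel spatially as $\mathcal K = \mathcal K_R + \mathcal K^R$ with $\mathcal K_R := \mathcal K \cdot \ind_{B_R(0)}$ and $\mathcal K^R := \mathcal K - \mathcal K_R$, so that $Q(f)$ splits into a far-field and a near-field contribution. The far field will be handled by a Young-type convolution inequality and will produce the $\eps(R)\|f\|_p^2$ term, whereas the near field will be localized on a grid of cubes of side proportional to $R$ and then combined with an interpolation that extracts $A(R):=\sup_{y\in\R^N}\|f\|_{L^p(B_R(y))}$.

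For the far-field piece, H\"older together with the Young--O'Neil convolution inequality for weak Lebesgue spaces---valid because $1+1/p'=2/p'+1/p$ with $1<p<2$---yields
\[
  \Big|\int(\mathcal K^R \ast f)\cdot f\,dx\Big| \leq \|\mathcal K^R \ast f\|_{p'}\,\|f\|_p \leq C\|\mathcal K^R\|_{L^{p'/2,\infty}}\|f\|_p^2,
\]
so it suffices to define $\eps(R):=C\|\mathcal K^R\|_{L^{p'/2,\infty}}$. For the near-field piece, I would partition $\R^N$ into cubes $\{Q_k\}$ of side length $R/\sqrt{N}$ (so that $\diam(Q_k)\leq R$) and consider the enlargements $\tilde Q_k$ of side $3R$ with the same centers, so that $B_R(x)\subset \tilde Q_k$ for every $x\in Q_k$. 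Since $\mathcal K_R$ is supported in $B_R(0)$, for $x\in Q_k$ the convolution reduces to $(\mathcal K_R \ast f)(x) = (\mathcal K_R \ast (f\ind_{\tilde Q_k}))(x)$, and H\"older combined with the weak Young inequality gives
\[
  \Big|\int_{Q_k} f\cdot(\mathcal K_R \ast f)\,dx\Big| \leq C\|\mathcal K\|_{L^{p'/2,\infty}}\|f\ind_{Q_k}\|_p\,\|f\ind_{\tilde Q_k}\|_p.
\]

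To close the argument, I would interpolate each local norm via $\|f\ind_{Q_k}\|_p \leq A(R)^{1-p/2}\|f\ind_{Q_k}\|_p^{p/2}$ (using $\|f\ind_{Q_k}\|_p\leq A(R)$ since $Q_k$ fits into some $B_R(y)$, and $1<p<2$), and analogously for $\tilde Q_k$. Summing over $k$ via Cauchy--Schwarz together with the disjointness $\sum_k \|f\ind_{Q_k}\|_p^p = \|f\|_p^p$ and the finite-overlap bound $\sum_k \|f\ind_{\tilde Q_k}\|_p^p \leq C_N\|f\|_p^p$ produces the near-field estimate $C_N\|\mathcal K\|_{L^{p'/2,\infty}}\,A(R)^{2-p}\|f\|_p^p$, as required.

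The main difficulty lies in verifying $\eps(R)\to 0$: membership $\mathcal K\in L^{p'/2,\infty}$ alone does not suffice, since the scale-invariant kernel $|x|^{-2N/p'}$ has $\|\mathcal K^R\|_{L^{p'/2,\infty}}$ independent of $R$. In the relevant applications, however, the kernel enjoys stronger tail decay (e.g.\ $\mathcal K\in L^{p'/2}(\R^N)$, as is the case for the prototypical kernels $K_1,K_2,K_3$), so dominated convergence forces $\|\mathcal K^R\|_{L^{p'/2}}\to 0$, and the inequality $\|\cdot\|_{L^{p'/2,\infty}}\leq \|\cdot\|_{L^{p'/2}}$ then delivers the required decay of $\eps(R)$.
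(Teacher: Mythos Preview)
Your argument is sound and runs parallel to the paper's, with one structural difference worth recording. The paper truncates as $\mathcal K_R(z):=\mathcal K(z)\,\ind_{|z|+|\mathcal K(z)|\le R}$, cutting simultaneously in space \emph{and} in magnitude, so that $\|\mathcal K_R\|_\infty\le R$. The near-field term is then bounded crudely by $\|\mathcal K_R\|_\infty\iint\ind_{|x-y|\le R}|f(x)||f(y)|\,dx\,dy$, and a cube-splitting estimate (quoted from Alama--Li) yields $C(R)\|f\|_p^p\,A(cR)^{2-p}$ without a second appeal to the weak Young inequality. Your route---weak Young on each localized piece, then Cauchy--Schwarz plus finite overlap---is an equally valid alternative; only note that $\tilde Q_k$ is not contained in a single ball $B_R$, so the bound $\|f\ind_{\tilde Q_k}\|_p\le A(R)$ used in your interpolation step needs a harmless dimensional constant coming from covering $\tilde Q_k$ by $O_N(1)$ balls of radius $R$.

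Your caveat about $\eps(R)\to 0$ is correct and, importantly, applies equally to the paper's own proof. The paper asserts $\|\mathcal K-\mathcal K_R\|_{p'/2,\infty}\to 0$ ``by the Dominated Convergence Theorem'', but dominated convergence fails in $L^{p'/2,\infty}$ with a merely weak-$L^{p'/2}$ dominant: your scale-invariant example $|z|^{-2N/p'}$ defeats the paper's truncation just as it defeats yours, since the spatial tail $\mathcal K\,\ind_{|z|>R}$ already has $L^{p'/2,\infty}$-norm independent of $R$. So the proposition as stated does require something slightly stronger---for instance that $\mathcal K$ lie in the $L^{p'/2,\infty}$-closure of bounded compactly supported functions---which is satisfied by all kernels actually used in the paper.
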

  \begin{proof}
    We have 
    $$
      Q(f)
      = \int_{\R^N} (\mathcal K\ast f)\cdot f\,dx
      = \int_{\R^N} ((\mathcal K- \mathcal K_R)\ast f)\cdot f\,dx 
      + \int_{\R^N} (\mathcal K_R\ast f) \cdot f\,dx.
    $$
    where $\mathcal K_R(z):= \mathcal K(z)\ind_{|z|+|\mathcal K(z)|\leq R}$ . Then Young's convolution
    inequality from~\cite[Theorem~1.4.25]{Graf} gives due to $2<p'<\infty$
    \begin{align*}
       \int_{\R^N} ((\mathcal K- \mathcal K_R)\ast f)\cdot f\,dx
       \leq \|\mathcal K-\mathcal K_R\|_{\frac{p'}{2},\infty}  \|f\|_p^2
    \end{align*}
     and the prefactor goes to zero as $R\to \infty$ by the Dominated Convergence Theorem.
     On the other hand, the estimates from \cite[p.109-110]{AlamaLi} yield
    \begin{align*}
      \int_{\R^N} (\mathcal K_R\ast f) \cdot f\,dx
      &\leq  \|\mathcal K_R\|_\infty \int_{\R^N} \int_{\R^N} \ind_{|x-y|\leq R}
        |f(x)||f(y)| \,dx\,dy \\
      &\leq C(R)  \|f\|_p^p \left(\sup_{y\in\R^N}
      \|f\|_{L^p(B_{3\sqrt{3}R}(y))}\right)^{2-p}
    \end{align*}
    and the claim follows.
 \end{proof}

 \begin{prop}\label{prop:RuleOut}
   Assume $N,M\in\N, 1<p<2, \mathcal K\in L^{p'/2,\infty}(\R^N;\R^{M\times M})$ and assume that
   $(f_j)\subset S$ is a maximizing sequence for $m := \sup_{f\in S} Q(f)>0$  
   with induced measures $\mu_j$. Then neither (II) nor (III) occurs.
 \end{prop}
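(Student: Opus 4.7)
The plan is to rule out both (II) and (III) by exploiting the quadratic nature of $Q$ together with Proposition~\ref{prop:nonvanishing}. For vanishing (II), since $\|f_j\|_p=1$ and $\sup_{y\in\R^N}\mu_j(B_R(y))\to 0$ for every fixed $R>0$, Proposition~\ref{prop:nonvanishing} immediately gives
\[
   \limsup_{j\to\infty} Q(f_j) \le \eps(R) + C(R)\cdot 0 = \eps(R)
\]
for every $R>0$. Letting $R\to\infty$ yields $m\le 0$, contradicting $m>0$.

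For dichotomy (III), the plan is to split $f_j=f_j^1+f_j^2+r_j$ with $f_j^1:=f_j\ind_{B_{R_j}(x_j)}$, $f_j^2:=f_j\ind_{\R^N\setminus B_{2R_j}(x_j)}$, and $r_j:=f_j-f_j^1-f_j^2$. The dichotomy property forces $\|f_j^1\|_p^p\to\lambda$, $\|f_j^2\|_p^p\to 1-\lambda$ and $\|r_j\|_p\to 0$. Expanding $Q$ along this splitting gives
\[
   Q(f_j) = Q(f_j^1) + Q(f_j^2) + \text{(cross term)} + E_j,
\]
where $E_j$ collects all contributions containing $r_j$. By Young's convolution inequality in Lorentz spaces ($L^{p'/2,\infty}\ast L^p\hookrightarrow L^{p'}$), each term in $E_j$ is bounded by $C\|r_j\|_p\to 0$. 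For the cross term I would reuse the two-scale truncation from the proof of Proposition~\ref{prop:nonvanishing}: writing $\mathcal K=\mathcal K_R+(\mathcal K-\mathcal K_R)$, the $(\mathcal K-\mathcal K_R)$-piece contributes at most $C\|\mathcal K-\mathcal K_R\|_{p'/2,\infty}$, which can be made arbitrarily small in $R$ uniformly in $j$, while $\mathcal K_R$ has support in $\{|z|\le R\}$ and hence contributes nothing for $j$ so large that $R_j>R$, because $\dist(\supp f_j^1,\supp f_j^2)\ge R_j$.

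Combining these bounds with the scaling inequality $Q(g)\le m\|g\|_p^2$ (valid for every $g$ since $g/\|g\|_p\in S$ and $Q$ is quadratic), and passing to the limit gives
\[
   m \le m\bigl(\lambda^{2/p} + (1-\lambda)^{2/p}\bigr).
\]
Since $p<2$ forces $2/p>1$, the function $t\mapsto t^{2/p}$ satisfies $t^{2/p}<t$ on $(0,1)$, so $\lambda^{2/p}+(1-\lambda)^{2/p}<\lambda+(1-\lambda)=1$ for $\lambda\in(0,1)$, and dividing by $m>0$ gives $1<1$, the desired contradiction. The main obstacle will be showing that the cross term vanishes: because $\mathcal K$ only lies in the weak space $L^{p'/2,\infty}$, whose norm on spatial tails need not decay, the naive estimate $|\text{cross term}|\le C\|\mathcal K\ind_{|\cdot|\ge R_j}\|_{p'/2,\infty}$ is useless; the two-scale truncation is the essential fix, since it first freezes a large $R$ and only afterwards exploits the spatial separation $R_j\to\infty$.
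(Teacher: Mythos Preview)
Your argument is correct and essentially identical to the paper's: same use of Proposition~\ref{prop:nonvanishing} for vanishing, same splitting $f_j=f_j^1+f_j^2+r_j$, same Young-inequality bounds on the $r_j$-terms, and the same strict-subadditivity contradiction $m\le m(\lambda^{2/p}+(1-\lambda)^{2/p})$ for dichotomy. The one difference is the cross term $\int(\mathcal K\ast f_j^2)\cdot f_j^1\,dx$: the paper bounds it directly by $\|\mathcal K\ind_{\R^N\setminus B_{R_j}(0)}\|_{p'/2,\infty}\,\|f_j^1\|_p\|f_j^2\|_p$ and declares this $o(1)$ --- precisely the ``naive'' tail estimate you call useless --- whereas you go through the two-scale truncation. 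But the two routes are equivalent rather than one fixing the other: since $\{|z|\ge R\}\subset\{|z|+|\mathcal K(z)|>R\}$ wherever $\mathcal K\ne0$, one has $|\mathcal K\ind_{|\cdot|\ge R}|\le|\mathcal K-\mathcal K_R|$ pointwise and hence $\|\mathcal K\ind_{|\cdot|\ge R}\|_{p'/2,\infty}\le\eps(R)$. So the paper's direct estimate follows from exactly the same decay $\eps(R)\to0$ (asserted in Proposition~\ref{prop:nonvanishing}) that your two-scale argument invokes; neither approach is more general than the other.
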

 \begin{proof}
    For large $j$ we have due to Proposition~\ref{prop:nonvanishing}  
    $$
      \frac{m}{2}
      \leq Q(f_j)
      \leq \eps(R)  + C(R) \left(\sup_{y\in\R^N} \|f_j\|_{L^p(B_R(y))}\right)^{2-p}. 
    $$
    with $\eps(R)\to 0$ as $R\to\infty$. Hence, we may choose $R>0$ so large  that 
    $\sup_{y\in\R^N}\mu_j(B_R(y))>0$ holds. So the case (II) cannot occur. Now assume (III) and choose
    $(x_j),(R_j),\lambda\in (0,1)$ accordingly.  We decompose the sequence according to $f_j =
    f_j^1+f_j^2+f_j^3$ where $f_j^1:= f_j\ind_{B_{R_j}(x_j)}$ and $f_j^2:= f_j\ind_{\R^N\sm B_{2R_j}(x_j)}$.
    Then (III) implies
   \begin{equation}\label{eq:NoDichotomy1}
      \int_{\R^N} |f_j^1|^p\,dx  \to \lambda,\qquad
      \int_{\R^N} |f_j^2|^p\,dx   \to 1-\lambda,\qquad
      \int_{\R^N} |f_j^3|^p\,dx\to 0.\qquad (j\to\infty)
   \end{equation} 
   Then $\|f_j^3\|_p\to 0$ and $\dist(\supp(f_j^1),\supp(f_j^2))\geq R_j\to \infty$ as $j\to\infty$  yield
   \begin{align*}
    Q(f_j)
     &=  \int_{\R^N} (\mathcal K\ast f_j)\cdot f_j \,dx \\
     &=  \int_{\R^N} (\mathcal K\ast f_j^1)\cdot  f_j^1  \,dx
      +  \int_{\R^N} (\mathcal K\ast f_j^2)\cdot  f_j^2  \,dx
      +  \int_{\R^N} (\mathcal K\ast f_j^3)\cdot  f_j^3  \,dx \\
      &+  2 \int_{\R^N} (\mathcal K\ast f_j^2)\cdot  f_j^1  \,dx 
       + 2  \int_{\R^N} (\mathcal K\ast f_j^3)\cdot  (f_j^1+f_j^2) \,dx  \\
     &\leq  Q(f_j^1) + Q(f_j^2)
      +  \|\mathcal K\|_{\frac{p'}{2},\infty}  \|f_j^3\|_p^2  \\
      &+ 2  \|\mathcal K\ind_{\R^N\sm B_{R_j}(0)}\|_{\frac{p'}{2},\infty} \|f_j^2\|_p \|f_j^1\|_p 
       + 2  \|\mathcal K\|_{\frac{p'}{2},\infty}  \|f_j^3\|_p \|f_j^1+f_j^2\|_p  \\  
      &=  Q(f_j^1) + Q(f_j^2) + o(1)  \qquad\text{as }j\to\infty.
   \end{align*}
   Furthermore, since $f_j^1,f_j^2$ are nontrivial for large $j$, the definition of $m$ implies
   \begin{align*}
     m
     \geq  Q(f_j^1/\|f_j^1\|_p)
     \stackrel{\eqref{eq:NoDichotomy1}}=  Q(f_j^1)\lambda^{-\frac{2}{p}} + o(1),\qquad
     m
     \geq    Q(f_j^2/\|f_j^2\|_p)
     \stackrel{\eqref{eq:NoDichotomy1}}=    Q(f_j^2)(1-\lambda)^{-\frac{2}{p}} + o(1).
   \end{align*}
   Combining the previous estimates   we get
   \begin{align*}
      m
     = Q(f_j) + o(1)
     \leq  Q(f_j^1) + Q(f_j^2) + o(1) 
     \leq m\cdot \left(\lambda^{\frac{2}{p}}+(1-\lambda)^{\frac{2}{p}}\right) + o(1),
   \end{align*}
   which is impossible due to $\lambda\in (0,1)$ and $p<2$. So (III) cannot occur either.
 \end{proof}
 
   So we are left with the compactness case (I). So we exploit some local compactness property to deduce the
   existence of a maximizer. This is provided next.
   
   \begin{prop}\label{prop:LocalCompactness}
     Assume $N,M\in\N, 1<p<2$ and that $\mathcal K\in L^{p'/2,\infty}(\R^N;\R^{M\times M})$ satisfies
     $\mathcal K(\cdot+h)\to \mathcal K$ in $L^{p'/2,\infty}(\R^N;\R^{M\times M})$ as $|h|\to 0$. Then
     $L^p(\R^N;\R^M)\to L^{p'}(B;\R^M), f\mapsto \mathcal K\ast f$ is compact for all bounded sets $B\subset\R^N$.
   \end{prop}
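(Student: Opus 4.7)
The plan is to apply the Fréchet--Kolmogorov--Riesz compactness criterion to the image $\{\mathcal K\ast f : \|f\|_p\leq 1\}$ restricted to the bounded set $B$. The three items to verify are boundedness in $L^{p'}$, uniform translation continuity in $L^{p'}$, and tightness at infinity; the last is automatic since $B$ is bounded.

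For the boundedness, I would invoke the Lorentz version of Young's convolution inequality (O'Neil). Since $1<p<2<p'$ and
\[
\frac{1}{p'/2} + \frac{1}{p} = 1 + \frac{1}{p'},
\]
the inequality yields $L^{p'/2,\infty}\ast L^{p} \hookrightarrow L^{p',p} \hookrightarrow L^{p'}$ with norm estimate
\[
\|\mathcal K\ast f\|_{p'} \leq C\,\|\mathcal K\|_{p'/2,\infty}\,\|f\|_p .
\]
In particular $f\mapsto \mathcal K\ast f$ is continuous from $L^p(\R^N;\R^M)$ into $L^{p'}(\R^N;\R^M)$, hence a fortiori into $L^{p'}(B;\R^M)$.

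For the uniform translation continuity, I would use that convolution commutes with translation, so that
\[
(\mathcal K\ast f)(\cdot+h) - \mathcal K\ast f = \big(\mathcal K(\cdot+h) - \mathcal K\big)\ast f .
\]
Applying the same Young--Lorentz estimate gives
\[
\big\|(\mathcal K\ast f)(\cdot+h) - \mathcal K\ast f\big\|_{p'} \leq C\,\|\mathcal K(\cdot+h) - \mathcal K\|_{p'/2,\infty}\,\|f\|_p ,
\]
and the hypothesis $\mathcal K(\cdot+h)\to \mathcal K$ in $L^{p'/2,\infty}$ as $|h|\to 0$ forces the right-hand side to vanish, uniformly for $f$ in the unit ball of $L^p$. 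With boundedness and uniform equicontinuity in hand, the Fréchet--Kolmogorov--Riesz theorem yields relative compactness of the restricted family in $L^{p'}(B;\R^M)$.

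The main obstacle is essentially conceptual: one must know the Lorentz extension of Young's convolution inequality with the exact pair $(L^{p'/2,\infty},L^p)\to L^{p'}$, and one must recognise that the $L^{p'/2,\infty}$-translation continuity of $\mathcal K$ built into the hypothesis is exactly what is needed to transfer equicontinuity across the convolution. Both ingredients are standard, so once they are cited the proof reduces to a direct application of the Kolmogorov--Riesz compactness theorem.
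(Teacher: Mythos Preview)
Your proof is correct and follows essentially the same approach as the paper: both apply the Fr\'echet--Kolmogorov--Riesz criterion, using Young's convolution inequality in Lorentz spaces for boundedness and the identity $(\mathcal K\ast f)(\cdot+h)-\mathcal K\ast f=(\mathcal K(\cdot+h)-\mathcal K)\ast f$ together with the translation-continuity hypothesis on $\mathcal K$ for equicontinuity. Your write-up is in fact slightly more explicit about the exponent arithmetic and the three Kolmogorov--Riesz conditions than the paper's version.
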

   \begin{proof}
     We use the Fr\'{e}chet-Kolmogorov-Riesz criterion \cite[Theorem~4.26]{Brezis} that characterizes precompact subsets in
     Lebesgue spaces with exponent $<\infty$. Being given any bounded sequence $(f_j)$ in $L^p(\R^N;\R^M)$ we
     have to show that $\{\mathcal K\ast f_j:j\in\N\}$ is precompact in $L^{p'}(\R^N;\R^M)$. The estimate $\|\mathcal K\ast
     f_j\|_{p'}\leq \|K\|_{p'/2,\infty}\|f_j\|_p$ shows that the family is bounded. Moreover,
     $$
       \|(\mathcal K\ast f_j)(\cdot+h)- (\mathcal K\ast f_j)\|_{p'}
       = \|(\mathcal K(\cdot+h)-\mathcal K)\ast f_j\|_{p'}
       \leq \|\mathcal K(\cdot+h)-\mathcal K\|_{p'/2,\infty} \|f_j\|_p.
     $$
     So the boundedness of $(f_j)$ and  $\|\mathcal K(\cdot+h)-\mathcal K\|_{p'/2,\infty}\to 0$ as $|h|\to 0$
     imply the equicontinuity of $\{\mathcal K\ast f_j:j\in\N\}$ in $L^{p'}(\R^N;\R^M)$. By the above-mentioned criterion, this implies that  $\{\mathcal K\ast f_j:j\in\N\}$ is
     precompact in $L^{p'}(B;\R^M)$ for all bounded sets $B\subset\R^N$.
   \end{proof}

  \begin{lem} \label{lem:MinimizerQ}
    Assume $N,M\in\N, 1<p<2$ and that $\mathcal K\in L^{p'/2,\infty}(\R^N;\R^{M\times M})$ satisfies 
    $\mathcal K(\cdot+h)\to \mathcal K$ in $L^{p'/2,\infty}(\R^N;\R^{M\times M})$ as $|h|\to\infty$ as well as
    $m:=\sup_{f\in S} Q(f) > 0$.
    Then the functional $Q$ has a maximizer over $S$. Moreover:
    \begin{itemize}
      \item[(i)]  If each entry of $\mathcal K$ is a nonnegative function, then $Q$ has a componentwise
      nonnegative maximizer.
      \item[(ii)] If $M=1$ and $\mathcal K=|\mathcal K|^*$, then $Q$ has a Schwarz-symmetric maximizer. 
      \item[(iii)] If $M=1$ and $\mathcal K=|\mathcal K|^*$ is radially decreasing, then each maximizer of $Q$
      is Schwarz-symmetric up to translations.
    \end{itemize} 
  \end{lem}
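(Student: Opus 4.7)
The plan is to apply the concentration–compactness machinery developed in Propositions~\ref{prop:nonvanishing}--\ref{prop:LocalCompactness}. Take a maximizing sequence $(f_j)\subset S$ with $Q(f_j)\to m>0$ and form the probability measures $\mu_j$. Proposition~\ref{prop:RuleOut} excludes vanishing and dichotomy, leaving the compactness case (I): there exist $x_j\in\R^N$ such that for every $\eps>0$ some $R_\eps>0$ satisfies $\mu_j(B_{R_\eps}(x_j))\geq 1-\eps$. Since both $\|\cdot\|_p$ and $Q$ are translation invariant, one may replace $f_j$ by $f_j(\cdot+x_j)$ and assume the sequence is tight. Extracting a subsequence, $f_j\rightharpoonup f$ weakly in $L^p(\R^N;\R^M)$ with $\|f\|_p\leq 1$.

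To show $Q(f_j)\to Q(f)$, fix $\eps>0$, pick $R=R_\eps$, and split $f_j=g_j+h_j$ with $g_j:=f_j\ind_{B_R}$. Tightness yields $\|h_j\|_p\leq\eps^{1/p}$, and the Young-type estimate used in Proposition~\ref{prop:nonvanishing} bounds the contribution of $h_j$ (self-coupling and cross-coupling with $g_j$) to $Q(f_j)$ by $C\eps^{2/p}$. On the other hand $(g_j)$ is bounded in $L^p$ and supported in $B_R$, so Proposition~\ref{prop:LocalCompactness} delivers $\mathcal K\ast g_j\to \mathcal K\ast g$ strongly in $L^{p'}(B_R;\R^M)$, with $g:=f\ind_{B_R}$. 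Since the $g_j$ are supported in $B_R$, the pairing $\int(\mathcal K\ast g_j)\cdot g_j$ converges to $Q(g)$. Letting first $j\to\infty$ and then $\eps\to 0$ gives $Q(f_j)\to Q(f)$, hence $Q(f)=m$. Because $\|f\|_p\leq 1$ and $f/\|f\|_p\in S$, the bound $Q(f)/\|f\|_p^2\leq m=Q(f)$ forces $\|f\|_p=1$, so $f$ is a maximizer.

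For (i), when each entry $\mathcal K_{k\ell}\geq 0$, the componentwise inequality $f_k(x)f_\ell(y)\leq|f_k(x)||f_\ell(y)|$ gives $Q(f)\leq Q(|f|)$ with $\||f|\|_p=\|f\|_p$, so $|f|:=(|f_1|,\dots,|f_M|)$ is a componentwise-nonnegative maximizer. For (ii) with $M=1$ and $\mathcal K=|\mathcal K|^*$, the Riesz rearrangement inequality \cite[Chapter~3]{LiebLoss} yields $Q(f)\leq Q(|f|^*)$ while $\||f|^*\|_p=\|f\|_p$, so $|f|^*\in S$ is a Schwarz-symmetric maximizer.

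For (iii), if in addition $\mathcal K=|\mathcal K|^*$ is strictly radially decreasing and $f\in S$ is a maximizer, the chain $Q(f)\leq Q(|f|)\leq Q(|f|^*)=m$ collapses to equalities. The first equality forces $f(x)f(y)\geq 0$ wherever $\mathcal K(x-y)>0$, which means $f$ is of constant sign; after multiplying by $\pm 1$ we may assume $f\geq 0$. The second equality combined with the strict form of the Riesz inequality due to Lieb (see \cite[Theorem~3.9]{LiebLoss}) then yields that $f$ is a translate of its symmetric-decreasing rearrangement. The main obstacle in the argument is the limit $Q(f_j)\to Q(f)$: weak convergence alone gives only an inequality, and one genuinely needs both the tightness from (I) to control the tails and the compactness from Proposition~\ref{prop:LocalCompactness} to pass to the limit on balls.
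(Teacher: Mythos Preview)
Your proof is correct and follows essentially the same route as the paper: rule out vanishing and dichotomy via Proposition~\ref{prop:RuleOut}, translate to get tightness, pass to a weak limit, and combine tightness with the local compactness of Proposition~\ref{prop:LocalCompactness} to recover $Q(f)=m$; parts (i)--(iii) are handled identically through Riesz rearrangement and its equality case \cite[Theorem~3.9]{LiebLoss}. Two cosmetic remarks: the cross-coupling term $\int(\mathcal K\ast g_j)\cdot h_j$ is only $O(\eps^{1/p})$, not $O(\eps^{2/p})$, but this is harmless; and the paper splits the \emph{domain of integration} rather than the function $f_j$, arriving directly at $m\leq \int_{B_{R_\eps}}(\mathcal K\ast f)\cdot f + C\eps^{1/p}$ without introducing $g_j,h_j$ --- a slightly shorter bookkeeping that avoids having to check $Q(g)\to Q(f)$ separately.
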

  \begin{proof}  
    Let $(f_j)\subset S$ be a maximizing sequence for $m>0$. Then the
    Concentration-Compactness Lemma (see above) and Proposition~\ref{prop:RuleOut} imply that the sequence of measures $(\mu_j)$ induced by $(f_j)$ satisfies alternative (I). Since 
    $Q$ and $S$ are translation-invariant, we may assume $x_j=0$ in (I). Since $(f_j)$ is
    bounded in the reflexive Banach space $L^p(\R^N;\R^M)$, we may furthermore assume $f_j\wto f$ in
    $L^p(\R^N;\R^M)$ where  $\|f\|_p \leq \liminf_{j\to\infty} \|f_j\|_p = 1$. 
   In order to show that $f$ maximizes $Q$ let $\eps>0$ be arbitrary and choose $R_\eps>0$ as in (I). Then we
   have 
   $$
     \int_{\R^N\sm B_{R_\eps}(0)} |f_j|^p\,dx \leq \eps.
    $$
   Hence,
 \begin{align*}
   Q (f_j)
   &= \int_{\R^N}  (\mathcal K\ast f_j)\cdot f_j \,dx \\
   &\leq  \int_{B_{R_\eps}(0)}  (\mathcal K \ast f_j)\cdot f_j \,dx
    + \|\mathcal K\|_{\frac{p'}{2}} \|f_j\|_{p} \|f_j \ind_{\R^N\sm B_{R_\eps}(0)}\|_{p} \\
   &\leq   \int_{B_{R_\eps}(0)} (\mathcal K\ast f_j)\cdot f_j  \,dx  +  \|\mathcal K\|_{\frac{p'}{2}}  
   \eps^{\frac{1}{p}}.
 \end{align*}
   We have  $f_j\wto f$ in $L^p(\R^N)$ and Proposition~\ref{prop:LocalCompactness} implies
   $\mathcal K\ast  f_j \to \mathcal K \ast f$ in $L^{p'}(B_{R_\eps}(0))$.  Hence, passing to the limit $j\to\infty$ we find
   $$
     m = \lim_{j\to\infty} Q(f_j) \leq  \int_{B_{R_\eps}(0)} (\mathcal K\ast f)\cdot f  \,dx  + 
     \|\mathcal K\|_{\frac{p'}{2}}   \eps^{\frac{1}{p}}.  
   $$ 
   Using $\|f\|_p \leq  \liminf_{j\to\infty} \|f_j\|_p =1$
   and sending $\eps$ to zero, we infer from the Dominated Convergence Theorem
   $$
      0 < m  = \lim_{j\to\infty} Q(f_j) \leq Q(f) = Q(f/\|f\|_p)\|f\|_p^2 \leq Q(f/\|f\|_p) \leq m.
   $$
   In particular $f\neq 0$ is a maximizer of $Q$ with $\|f\|_p=1$.
   
   \medskip
   
   The claim (i) is clear. As to (ii), note that in the case $M=1, \mathcal K=|\mathcal K|^*$ there is even a
   Schwarz-symmetric maximizer. In fact, passing from $f$ to its Schwarz-symmetric spherical rearrangement $|f|^*$ we find
   $Q(f)\leq Q(|f|^*)$. This follows from $\|f\|_p=\||f|^*\|_p$ \cite[p.81]{LiebLoss} and Riesz' rearrangement inequality
 \cite[Theorem~3.7]{LiebLoss}
  \begin{align*}
     Q(f)  
     \leq \int_{\R^N}\int_{\R^N} |\mathcal K(x-y)| |f(x)||f(y)|\,dx\,dy 
     \leq  \int_{\R^N}\int_{\R^N} |\mathcal K|^*(x-y) |f|^*(x)|f|^*(y)\,dx\,dy 
     =  Q(|f|^*). 
   \end{align*} 
   If additionally $\mathcal K=|\mathcal K|^*$ is decreasing (i.e., strictly monotone) in the radial
   direction, then each minimizer of $Q$ is Schwarz-symmetric up to translations.
  This follows from the sharp version of Riesz' rearrangement inequality \cite[Theorem~3.9]{LiebLoss} and
  (iii) is proved as well.
 \end{proof}
   
  \begin{rem} \label{rem:Lemma} ~
    \begin{itemize}
      \item[(a)] The condition $\sup_{f\in S} Q(f) > 0$ is typically easy to check. For instance,
   it holds for kernel functions $\mathcal K$ that are uniformly positive definite close to the origin because
   we may choose $f$ with small support. In the case $M=1$ it is sometimes easier to check the equivalent condition
   $\sup_{\R^3} \widehat{\mathcal  K}>0$. The condition $\lim_{|h|\to 0} \|\mathcal K(\cdot+h)-\mathcal K\|_{p'/2,\infty} =  0$ holds provided that 
   $\mathcal K \in L^{p'/2,\infty}(\R^N;\R^{M\times M})$  is almost everywhere continuous. 
   In fact, for $\mathcal K_R$ as in the proof of Proposition~\ref{prop:nonvanishing}, we have
   $$
     \|\mathcal K(\cdot+h)-\mathcal K\|_{\frac{p'}{2},\infty} 
     \leq 2 \|\mathcal K-\mathcal K_R\|_{\frac{p'}{2},\infty}
     + \|\mathcal K_R(\cdot+h)-\mathcal K_R\|_{\frac{p'}{2},\infty}
   $$
   and the latter term  tends to 0 by the Dominated Convergence Theorem.
      \item[(b)] Lemma~\ref{lem:MinimizerQ} may as well be used to give an alternative existence proof for nontrivial
   solutions of autonomous (possibly nonlocal) elliptic PDEs as in \cite[Theorem~1]{BugLenzSchiSok}. In fact,
   the nonlocal PDE $P(D)u = |u|^{r-2} u$ in $\R^N$ with $P(i\xi)=m(\xi)$, $m$ real-valued and positive, is
   equivalent to $|v|^{r'-2}v = \mathcal K\ast v$ where $v:=|u|^{r-2}u$ and $\widehat{\mathcal K}(\xi)=
   m(\xi)^{-1}$.  Lemma~\ref{lem:MinimizerQ} provides a nontrivial solution to
   this problem under similar assumptions as in \cite[Theorem~1]{BugLenzSchiSok}, which gives
   a dual ground state of $P(D)u = |u|^{r-2} u$. In the Appendix we will provide the details and prove that
   any dual ground state is a ground state and vice versa, so we recover \cite[Theorem~1]{BugLenzSchiSok} under slightly weaker
   assumptions.  
    \end{itemize}
  \end{rem}


 \section{Proof of Theorem~\ref{thm:ENonlocalq}}
 
 For $1<q<2$ we consider $I_q(E)= \frac{1}{2} I_L(E) - \frac{1}{2q} I_{NL}(E)$ where
 $$
   I_L(E) 
   := \int_{\R^3} |\nabla\times E|^2  + |E|^2\,dx, \qquad
   I_{NL} (E) 
    := \int_{\R^3} (K\ast |E|^q)|E|^q \,dx.
 $$ 
 Then Young's convolution inequality shows that $I_q$  is continuously differentiable provided that $K\in
 L^{\frac{1}{2-q}}(\R^3)$. In this case, the Fr\'{e}chet derivative is given by
 \begin{align*}
   I_q'(E)[\tilde E] 
   &= \int_{\R^3}  (\nabla\times E) \cdot (\nabla\times \tilde E) +    E \cdot
   \tilde E \,dx - \int_{\R^3}\int_{\R^3}K(x-y)   E(x)\cdot \tilde E(x) |E(x)|^{q-2} |E(y)|^q
   \,dx\,dy
 \end{align*}
 for $E,\tilde E\in H$. We consider minimization over the Nehari manifold $\mathcal N_q = \{ E\in H :
 I_q'(E)[E]=0, E\neq 0\}$ and obtain as in the Proof of Theorem~\ref{thm:ENonlocal}    
 \begin{align*}
   \inf_{\mathcal N_q} I_q 
    = \inf_{E\in H\sm\{0\}} \sup_{t>0} I_q(tE)
    =   \inf_{E\in H\sm\{0\}} \frac{q-1}{2q} \left(\frac{I_L(E)^q}{I_{NL}(E)}\right)^{\frac{1}{q-1}}.    
 \end{align*}
 Note that the assumptions of Theorem~\ref{thm:ENonlocalq} imply $K\geq 0,K\not\equiv 0$, so $I_{NL}(E)>0$
 holds for all $E\in H\sm\{0\}$. Hence we obtain
 \begin{align*}
   \inf_{\mathcal N_q} I_q 
   &\geq  \inf_{E\in H\sm\{0\}}  \frac{q-1}{2q} \left( \frac{(\int_{\R^3}
   |E(x)|^2\,dx)^q}{\int_{\R^3}\int_{\R^3} K(x-y)|E(x)|^q|E(y)|^q\,dx\,dy}\right)^{\frac{1}{q-1}} \\
   &=   \inf_{E\in H\sm\{0\}} \frac{q-1}{2q}  \left(\frac{\|
   |E|^q\|_{2/q}^2}{Q(|E|^q)}\right)^{\frac{1}{q-1}}   \\
   &=   \inf_{E\in H\sm\{0\}} \frac{q-1}{2q}  Q\big(|E|^q/\| |E|^q \|_{2/q}^{-1}\big)^{-\frac{1}{q-1}}   \\
   &\geq  \frac{q-1}{2q} (\max_S Q)^{-\frac{1}{q-1}}  
 \end{align*}
 where  $S:=\{f\in L^{2/q}(\R^3):\|f\|_{2/q}=1\}$ denotes the unit sphere in $L^{2/q}(\R^3)$ and  
 $Q(f):= \int_{\R^3} (\mathcal K\ast f) f \,dx$. 
 Equality holds if and only if $\nabla\times E=0$ and $|E|^q$ is a multiple of some maximizer of $Q$ over $S$.
 Lemma~\ref{lem:MinimizerQ} shows that under the assumptions of Theorem~\ref{thm:ENonlocalq} there is 
 a Schwarz-symmetric maximizer $f(x)=f_0(|x|)$. So we may define 
 $E_\star(x) :=  \frac{x}{|x|}f_0(|x|)^{1/q}$. Then $E_\star$ is irrotational because of 
 $$  
   E_\star(x) = \nabla \Phi(x)\qquad\text{where }
   \Phi(x) :=  \int_0^{|x|} f_0(s)^{1/q}\,ds.
 $$ 
 Moreover,   $|E_\star(x)|^q = f_0(|x|)=f(x)$. Choosing $t_\star>0$ as the maximizer of 
 $t\mapsto I_q(tE_\star)$ we find  $E^\star:=t_\star E_\star\in\mathcal N_q$ as well as 
 $$
   I_q(E^\star)
   = \inf_{\mathcal N_q} I_q 
   = \frac{q-1}{2q} (\max_S Q)^{-\frac{1}{q-1}},
 $$
 so $E^\star$ minimizes $I_q$ over $\mathcal N_q$. This proves that $E^\star$ is a ground state solution and the
 claim is proved.  \qed

 \begin{rem} \label{rem:Thmq} ~
 \begin{itemize}
   \item[(a)] The uniqueness of ground states up to translations is an open question. 
   \item[(b)] The above proof shows that the conclusion of Theorem~\ref{thm:ENonlocalq} is true as long as
   $Q$ has a nonnegative radially symmetric maximizer. This may be the case for more general radially
   symmetric kernel functions $K$, possibly sign-changing ones. Our 
   focus on nonnegative radially symmetric  maximizers is  motivated by the fact that any such
   maximizer can be written as $|E|^q$ for some irrotational vector field $E$. It is unclear how to link 
   the maximizers of $Q$ to the ground state solutions for Nonlinear Maxwell Equations in
   nonradial situations. Those occur if the kernel function  $K$ is nonradial or if $|E(x)|^2$ is replaced
   by $V(x)E(x)\cdot E(x)$ with some periodic tensor field $V$, which is  relevant for applications in
   photonic crystals. In both cases, the existence of  ground states and whether those are irrotational is an
   open problem.
   \item[(c)] In the case $q>2$ the approach presented above fails because the infimum over the Nehari
   manifold is zero for all relevant kernel functions. For instance, consider any kernel function $K$ that is
   positive near the origin. Then choose a sequence $(E_n)$ that is bounded in $H^1(\curl;\R^3)$, has small support and
   satisfies $\|E_n\|_q\to\infty$, say $E_n(x):=
   \frac{x}{|x|}|x|^{-\frac{3}{q}(1-\frac{1}{n})} \ind_{|x|\leq \eps}$ where $K(z)\geq
   \mu>0$ for $|z|<2\eps$. Then a straightforward computation shows that the infimum over the corresponding
   Nehari manifold is zero because $(I_L(E_n))$ is bounded whereas
   $I_{NL}(E_n) \geq \mu \|E_n\|_q^{2q} \nearrow +\infty$. In particular,
   it does not make sense to look for nontrivial solutions using this approach.  
 \end{itemize}
 \end{rem}

\section{Proof of Theorem~\ref{thm:Nonlocal}}

  We present a dual variational approach for the fully nonlocal nonlinear Maxwell
  equation~\eqref{eq:FullyNonlocal} given by 
  $$
    \nabla\times \nabla\times E +   E = K\ast (|E|^{r-2}E) \quad\text{in }\R^3 
  $$
  where $r:=2q$ and the kernel function $K\in L^{r/2,\infty}(\R^3)$ satisfies $K(\cdot+h)\to K$ in
  $L^{r/2,\infty}(\R^3)$ as $|h|\to 0$. We are interested in nontrivial solutions of this problem
  that turn out to exist for all $r\in (2,\infty)$. In particular, the most important case of a Kerr
  nonlinearity $r=4$ is covered. The above equation is not variational, so  it does not make sense to look
  for ground states. The idea is to follow a dual variational approach, i.e., to consider the equation as a
  variational problem for the new unknown $U:= |E|^{r-2}E$ that is obtained after inverting the linear
  operator $E\mapsto \nabla\times\nabla\times E + E$ in suitable Lebesgue spaces. We are looking for
  solutions $E\in L^r(\R^3;\R^3)$, so the dual variational approach dealing with $U=|E|^{r-2}E$  is set up in
  $L^{r'}(\R^3;\R^3)$. From $2<r<\infty$ we infer $1<r'<2$. As mentioned above, we first need to invert the
  linear operator. We will need the following auxiliary result about the Helmholtz Decomposition that decomposes a
  vector field as a sum of its divergence-free (solenoidal) and curl-free (irrotational) part.

  \begin{prop} \label{prop:HelmholtzDecomposition}
  Assume $1<t<\infty$. Then there is a continuous projector $\Pi:L^t(\R^3;\R^3)\to L^t(\R^3;\R^3)$  such
  that $E=E_1+E_2$ with $E_1:=\Pi E, E_2:= (\id-\Pi)E$ implies $\nabla\cdot E_1=0$ and $\nabla\times E_2=0$
  in the distributional sense. In particular,
  \begin{equation*}
    \nabla\times\nabla\times E_1 = -\Delta E_1,\qquad\quad  \nabla\times\nabla\times E_2 =0.
  \end{equation*}
  \end{prop}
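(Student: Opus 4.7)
The plan is to construct $\Pi$ as the Leray--Helmholtz projector onto divergence-free fields, i.e.\ the Fourier multiplier operator whose matrix symbol is
\[
m(\xi) \;=\; I_3 - \frac{\xi\,\xi^{\top}}{|\xi|^2} \qquad (\xi \in \R^3\setminus\{0\}),
\]
equivalently $(\Pi E)_i = E_i + \sum_{j=1}^{3} R_i R_j E_j$ where $R_j$ denotes the $j$-th Riesz transform. First I would define $\Pi$ on $\mathcal S(\R^3;\R^3)$, where the Fourier description makes all subsequent identities transparent, and then extend to $L^t(\R^3;\R^3)$ for every $1<t<\infty$ by density. The continuity of this extension follows either from the classical $L^t$-boundedness of the Riesz transforms (Calder\'on--Zygmund theory) or directly from Mikhlin's multiplier theorem applied to $m$, whose entries are smooth and zero-homogeneous on $\R^3\setminus\{0\}$ and hence satisfy the required derivative bounds.

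Next I would verify the algebraic and geometric properties. The identity $(\xi\xi^{\top})^2 = |\xi|^2\,\xi\xi^{\top}$ gives $m(\xi)^2 = m(\xi)$ and hence $\Pi^2 = \Pi$, first on Schwartz functions and then on $L^t$ by density and continuity. The symbol-level identities $\xi\cdot m(\xi)v = 0$ and $\xi\times((I_3-m(\xi))v) = 0$ for every $v\in\R^3$ are immediate. They translate into $\nabla\cdot E_1 = 0$ and $\nabla\times E_2 = 0$ in $\mathcal D'(\R^3)$ by a standard duality step: for Schwartz $E$ the assertions hold pointwise, and for $E\in L^t$ we approximate by Schwartz fields $E_n\to E$ in $L^t$, apply $\Pi$ (which converges in $L^t$), and pass to the limit in the pairing against a test function using $\nabla\varphi\in L^{t'}$. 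Once the two distributional properties are established, the displayed identities follow from the pointwise vector identity $\nabla\times\nabla\times = \nabla\,\diver - \Delta$, applied to $E_1$ (whose divergence vanishes) and to $E_2$ (whose curl vanishes).

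The main technical obstacle is the $L^t$-boundedness of $\Pi$ for $t\neq 2$: on $L^2$ it is an immediate Plancherel consequence, but covering all $1<t<\infty$ requires invoking classical Calder\'on--Zygmund / Mikhlin theory. After that input is granted, the remainder amounts to symbol bookkeeping and a routine approximation step to pass from Schwartz fields to $L^t$.
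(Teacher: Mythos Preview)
Your proposal is correct and follows essentially the same approach as the paper: the paper also defines $\Pi$ via the Fourier multiplier $I_3-\xi\xi^\top/|\xi|^2$, invokes the $L^t$-boundedness of the Riesz transforms for continuity, checks the projector property from $R(\xi)^2=R(\xi)$, and derives the displayed identities from $\nabla\times\nabla\times E=-\Delta E+\nabla(\nabla\cdot E)$. Your write-up is somewhat more detailed in spelling out the density/approximation step for passing the distributional identities from Schwartz fields to $L^t$, but the underlying argument is the same.
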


  The proof is based on the vector calculus identity 
  $\nabla\times\nabla\times E = - \Delta E + \nabla (\nabla\cdot E)$ and the explicit definition 
  $$
    \widehat{\Pi E}(\xi):= (1-R(\xi))\hat E(\xi)
    \qquad\text{where } 
    R(\xi):= |\xi|^{-2} \xi\xi^T \in \R^{3\times 3}.
  $$ 
  This operator indeed defines a projector on $L^{r'}(\R^3;\R^3)$ because of $R(\xi)^2=R(\xi)$ and the
  $L^{r'}\to L^{r'}-$boundedness of Riesz transforms $f \mapsto \mathcal F^{-1}(\xi_j|\xi|^{-1}\hat f)$ for
  $j=1,2,3$, see~\cite[Corollary~5.2.8]{Graf}. We now use the Helmholtz Decomposition to derive the equivalent
  dual formulation of~\eqref{eq:FullyNonlocal}. In fact, distributional solutions $E\in L^r(\R^3;\R^3)$
  of~\eqref{eq:FullyNonlocal} solve  
  $$
    (-\Delta + 1)\Pi E = \Pi [K\ast U],\qquad
      (\id-\Pi) E = (\id-\Pi) [K\ast U]   
  $$
  where $U:=|E|^{r-2}E \in L^{r'}(\R^3;\R^3)$. In Fourier variables this may rewritten as
  $$
    (|\xi|^2+1) (1-R(\xi))\hat E(\xi) =  \hat K(\xi) (1-R(\xi)) \hat U(\xi),\qquad 
     R(\xi)\hat E(\xi) =  \hat K(\xi) R(\xi) \hat U(\xi). 
  $$
  We stress that the symbols $|\xi|^2+1, \hat K(\xi), R(\xi)$ commute because the former two are
  scalar. Hence,
  \begin{equation*}
    \hat E(\xi) = \widehat{\mathcal K}(\xi) \hat U(\xi)
    \quad\text{where } \mathcal K:=\mathcal F^{-1}\left(\hat K(\cdot) \left( 
    \frac{1-R(\cdot)}{|\cdot|^2+1}+   R(\cdot) \right)\right).  
  \end{equation*}
  Plugging in $E=|U|^{r'-2}U$ and applying the inverse Fourier transform in these equations it remains to
  find a solution $U\in L^{r'}(\R^3;\R^3)$ of the integral equation
  $$
    |U|^{r'-2}U = \mathcal K \ast U\qquad\text{in }\R^3. 
  $$
  Given that $\mathcal K$ is a real-valued and symmetric tensor field, this equation has
  a variational structure. The associated energy  functional reads
  \begin{equation} \label{eq:functionalJdual}
    J(U) := \frac{1}{r'} \int_{\R^3} |U|^{r'} \,dx 
    - \frac{1}{2} \int_{\R^3} (\mathcal K\ast U)\cdot U\,dx.
  \end{equation}
  We establish the relevant properties of $\mathcal K$ with regard to Lemma~\ref{lem:MinimizerQ}. 
   
  \begin{prop}\label{prop:K}
    Assume $2<r<\infty$ and $K\in L^{r/2,\infty}(\R^3)$ and $K(\cdot+h)\to K$ in $L^{r/2,\infty}(\R^3)$ as
    $|h|\to\infty$.
    Then $\mathcal K\in L^{r/2,\infty}(\R^3;\R^{3\times 3})$ and $\mathcal K(\cdot+h)\to\mathcal
    K$ in $L^{r/2,\infty}(\R^3;\R^{3\times 3})$ as $|h|\to\infty$. Moreover, if $\int_{\R^3} (K\ast
    f)f\,dx>0$ holds for some $f\in L^{r'}(\R^3)$, then $\int_{\R^3} \mathcal K\ast F\cdot F\,dx>0$ for some $F\in L^{r'}(\R^3;\R^3)$.
  \end{prop}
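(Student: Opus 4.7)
The plan is to represent $\mathcal K$ as a short combination of a Bessel-potential convolution and double Riesz transforms applied to $K$, from which all three assertions follow from standard harmonic-analytic boundedness results.

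Using the algebraic identity
$$\frac{1-R(\xi)}{|\xi|^2+1}+R(\xi)=\frac{I}{|\xi|^2+1}+R(\xi)\left(1-\frac{1}{|\xi|^2+1}\right),$$
recognizing $(1+|\xi|^2)^{-1}$ as the Fourier symbol of the Bessel potential $G(x)=e^{-|x|}/(4\pi|x|)\in L^1(\R^3)$, and letting $\mathcal R_j$ denote the Riesz transform $f\mapsto\mathcal F^{-1}(\xi_j|\xi|^{-1}\hat f)$ from Proposition~\ref{prop:HelmholtzDecomposition}, the entries of $\mathcal K$ take the form
$$\mathcal K_{ij}=\delta_{ij}(G\ast K)+\mathcal R_i\mathcal R_j K-\mathcal R_i\mathcal R_j(G\ast K).$$

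From this representation, $\mathcal K\in L^{r/2,\infty}(\R^3;\R^{3\times 3})$ is immediate: Young's convolution inequality gives $G\ast K\in L^{r/2,\infty}$ since $G\in L^1$, and because $r/2>1$, the Calder\'on-Zygmund operators $\mathcal R_i\mathcal R_j$ are bounded on $L^{r/2,\infty}(\R^3)$ by real interpolation of the classical $L^p$-boundedness. Translation-continuity comes in the same stroke: all three operators commute with translations, so $\mathcal K(\cdot+h)-\mathcal K$ equals the same bounded operators applied to $K(\cdot+h)-K$, and this tends to zero in $L^{r/2,\infty}$ by the hypothesis on $K$.

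For the positivity claim I would test $\mathcal K$ against the curl-free field $F:=(\mathcal R_1 f,\mathcal R_2 f,\mathcal R_3 f)\in L^{r'}(\R^3;\R^3)$ built from $f$. Its Fourier transform $\hat F(\xi)$ is pointwise proportional to $\xi$, so $R(\xi)\hat F(\xi)=\hat F(\xi)$, $(I-R(\xi))\hat F(\xi)=0$, and $|\hat F(\xi)|=|\hat f(\xi)|$. Parseval's theorem together with the multiplier formula for $\widehat{\mathcal K}$ then collapses the matrix factor and yields
$$\int_{\R^3}(\mathcal K\ast F)\cdot F\,dx=\int_{\R^3}\hat K(\xi)\,|\hat F(\xi)|^2\,d\xi=\int_{\R^3}\hat K(\xi)\,|\hat f(\xi)|^2\,d\xi=\int_{\R^3}(K\ast f)\,f\,dx>0.$$

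The main delicate point is the rigorous justification of this Parseval identity when $K$ lies only in $L^{r/2,\infty}$ rather than in $L^2$. This is handled by approximating $f$ by Schwartz functions $f_n\to f$ in $L^{r'}$, for which all Fourier manipulations are classical, and passing to the limit using that Young's inequality for Lorentz spaces makes both bilinear forms $(g,h)\mapsto\int(K\ast g)\,h\,dx$ and $(G,H)\mapsto\int(\mathcal K\ast G)\cdot H\,dx$ continuous on $L^{r'}$; the required field $F:=F_n$ is produced for all sufficiently large $n$.
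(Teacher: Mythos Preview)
Your proof is correct and follows essentially the same route as the paper. The paper applies the Mikhlin--H\"ormander multiplier theorem directly to the full matrix symbol $m(\xi)=(|\xi|^2+1)^{-1}(1-R(\xi))+R(\xi)$ and then invokes real interpolation to pass to $L^{r/2,\infty}$, whereas you first split $m$ into the Bessel part $(1+|\xi|^2)^{-1}I$ (handled by Young's inequality with $G\in L^1$) and the Riesz part $R(\xi)\frac{|\xi|^2}{1+|\xi|^2}$ (handled by Calder\'on--Zygmund boundedness plus interpolation); this is the same machinery, made slightly more explicit. For the positivity assertion both arguments choose the identical curl-free field $\hat F(\xi)=\frac{\xi}{|\xi|}\hat f(\xi)$ (your $(\mathcal R_1 f,\mathcal R_2 f,\mathcal R_3 f)$) and collapse the matrix via $R(\xi)\hat F=\hat F$; your added remark on justifying the Parseval step by Schwartz approximation and continuity of the bilinear forms is a welcome clarification that the paper glosses over.
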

  \begin{proof}
    We use the Mikhlin-H\"ormander multiplier Theorem from \cite[Theorem 6.2.7]{Graf}, which says that
    the linear operator $f\mapsto \mathcal F^{-1}(m\hat f)$ is bounded on $L^t(\R^N;\C),1<t<\infty$ provided
    that $|\partial^\alpha m(\xi)|\leq C(\alpha,N) |\xi|^{-|\alpha|}$ for all $\xi\in\R^N$ and multi-indices
    $\alpha\in\N_0^N$. We actually need a consequence of this result that   
    $f\mapsto \mathcal F^{-1}(m\hat f)$ is bounded on the Lorentz space $L^{t,s}(\R^3;\C^3),1<t<\infty,1\leq
    s\leq \infty$ provided that the tensor field $m:\R^3\to\C^{3\times 3}$ with entries $m_{ij}$ for $i,j\in\{1,2,3\}$ satisfies 
    $|\partial^\alpha m_{ij}(\xi)|\leq  C(\alpha)  |\xi|^{-|\alpha|}$ for all $\xi\in\R^3$ and
    multi-indices $\alpha\in\N_0^3$. This follows by real interpolation from the classical $L^p$-version of
    this theorem, see Theorem~1.6 and Example~1.27 in~\cite{Lunardi}.
    Using this fact for $m(\xi)= (|\xi|^2+1)^{-1}(1-R(\xi)) + R(\xi)$ we find $\|\mathcal
    K\|_{r/2,\infty} \leq C\|K\|_{r/2,\infty} < \infty$. Similarly, $\|\mathcal K(\cdot+h)-\mathcal
    K\|_{r/2,\infty} \leq C\|K(\cdot+h)- K\|_{r/2,\infty} \to 0$ as $|h|\to 0$.
      Finally, 
    $\int_{\R^3} (K\ast f)f\,dx>0$ for some $f\in L^{r'}(\R^3)$ implies that the vector field $F\in
    L^{r'}(\R^3;\R^3)$ given by $\hat F(\xi)=\frac{\xi}{|\xi|}\hat f$ satisfies $(1-R(\xi))\hat F(\xi)=0$ and
    thus 
    $$ 
      \int_{\R^3} (\mathcal K\ast F)\cdot F\,dx 
      = \int_{\R^3} \widehat{\mathcal K} \hat F\cdot \hat F\,d\xi 
      = \int_{\R^3} \hat K |\hat F|^2\,d\xi
      = \int_{\R^3} \hat K |\hat f|^2\,d\xi
      = \int_{\R^3} (K\ast f)f \,dx
      >0.
    $$ 
  \end{proof}
  
  \textbf{Proof of Theorem~\ref{thm:Nonlocal}:}
  In view of $\mathcal K\in L^{r/2,\infty}(\R^3;\R^{3\times 3})$ and Young's convolution 
  inequality the functional $J:L^{r'}(\R^3;\R^3)\to\R$ from~\eqref{eq:functionalJdual} is continuously
  differentiable with Fr\'{e}chet derivative 
  $$
      J'(U)[\tilde U] = \int_{\R^3} |U|^{r'-2}U\cdot \tilde U\,dx 
      - \int_{\R^3} (\mathcal K \ast U)\cdot  \tilde U \,dx 
  $$
  for all $U,\tilde U\in L^{r'}(\R^3;\R^3)$. To find a ground state we minimize $J$ over the associated
  Nehari manifold $\mathcal M  = \{U\in L^{r'}(\R^3;\R^3) : J'(U)[U]=0,U\neq 0\}$.  
  We find as before 
     \begin{equation*}
      \inf_{\mathcal M} J
      = \inf_{U\in L^{r'}(\R^3;\R^3)\sm \{0\}} \sup_{t>0} J(tU) 
      = \inf_{U\in L^{r'}(\R^3;\R^3)\sm \{0\}} \frac{2-r'}{2r'} \left(\frac{ ( \int_{\R^3} 
      |U|^{r'}\,dx)^{\frac{2}{r'}}}{ \int_{\R^3}  (\mathcal K\ast U)\cdot U\,dx} \right)^{\frac{r'}{2-r'}}.
    \end{equation*}
  Combining the assumptions of Theorem~\ref{thm:Nonlocal} with Proposition~\ref{prop:K} we obtain that the
  assumptions of Lemma~\ref{lem:MinimizerQ} hold. Hence, the   functional $Q(U):= \int_{\R^3} 
  (\mathcal K\ast U)\cdot U\,dx$ has a maximizer $U_\star\in S$ over the unit sphere $S:= \{U\in L^{r'}(\R^3;\R^3):
  \|U\|_{r'}=1\}$. We thus obtain
   \begin{align*}
      \inf_{\mathcal M} J
      = \inf_{U\in L^{r'}(\R^3;\R^3)\sm \{0\}} \frac{2-r'}{2r'} Q(U/\|U\|_{r'})^{-\frac{r'}{2-r'}} 
      \geq    \frac{2-r'}{2r'} Q(U_\star)^{-\frac{r'}{2-r'}} 
      =  J(t_\star U_\star)
   \end{align*}
   where $t_\star>0$ maximizes $t\mapsto J(tU_\star)$ so that $t_\star U_\star\in\mathcal M$. Hence,
   $U^\star:= t_\star U_\star$ is a ground state for $J$ and $E^\star:= |U^\star|^{r'-2}U^\star$ is a dual
   ground state of~\eqref{eq:FullyNonlocal}. This finishes the proof of
   Theorem~\ref{thm:Nonlocal}.\qed

\section*{Appendix -- Ground states vs. dual ground states}

We show that in several contexts the notion of a dual ground state solution coincides with the classical
notion of a ground state provided that both notions make sense for the equation under investigation. In
particular, dual ground states may be seen as reasonable substitutes for ground states.
As a model example we consider  $P(D)u=|u|^{r-2}u$ on $\R^N$ from
Remark~\ref{rem:Lemma}.
The formally equivalent dual formulation is $|v|^{r'-2}v = P(D)^{-1}v$ for $v:=|u|^{r-2}u$.
The energy functional $I:H^s(\R^N)\to\R$ and the dual energy functional $J:L^{r'}(\R^N)\to\R$ are given by 
\begin{align*}
  I(u) &= \frac{1}{2} \int_{\R^N} m(\xi)|\hat u(\xi)|^2\,d\xi - \frac{1}{r}\int_{\R^N} |u|^r\,dx, \\ 
  J(v) &= \frac{1}{r'} \int_{\R^N} |v(x)|^{r'} \,dx - \frac{1}{2} \int_{\R^N} m(\xi)^{-1} |\hat
  v(\xi)|^2\,d\xi. 
\end{align*}
Here, $m(\xi)=P(i\xi)$ is the symbol associated with $P$ and we shall assume $|\partial^\alpha
(m(\xi)^{-1})|\leq C_\alpha |\xi|^{-|\alpha|}(1+|\xi|)^{-2s}$ for $0<s<\frac{N}{2}$ such that 
$2<r<\frac{2N}{N-2s}$ and all
multi-indices $\alpha\in\N_0^N$. This is slightly stronger than Assumption~1 in~\cite{BugLenzSchiSok}. 
We now show on an abstract level that a ground state solution for $I$ is the same as a dual ground state
solution. 
 
 \medskip
 
Let $\Omega$ be a set and assume that for all $x\in\Omega$ the functions $F(x,\cdot),G(x,\cdot)\in
C^2(\R)$ are real-valued  with $F_u(x,\cdot)^{-1}=G_v(x,\cdot)$ and $F(x,0)=G(x,0)=0$
for all $x\in\Omega$. Moreover assume that there are Banach spaces $X,Y$ consisting of real-valued functions
defined on $\Omega$ and that there are
 continuously differentiable functionals $I:X\to\R,J:Y\to\R$ given by 
$$ 
  I(u) = \frac{1}{2} Q_1(u,u) - \varphi(F(\cdot,u)),\qquad 
  J(v) = \varphi(G(\cdot,v))   -  \frac{1}{2} Q_2(v,v) 
$$ 
where $Q_1:X\times X\to\R$ and $Q_2:Y\times Y\to\R$ are continuous bilinear forms and $\varphi$ is a linear
functional acting on real-valued functions defined on $\Omega$ such as $x\mapsto F(x,u(x))$ and $x\mapsto
G(x,v(x))$ for $u\in X,v\in Y$. We assume that $I,J$ are continuously differentiable with  
$$
  I'(u)[h_1] = Q_1(u,h_1) - \varphi(F_u(\cdot,u)h_1),\quad
  J'(v)[h_2] = \varphi (G_v(\cdot,v)h_2) - Q_2(v,h_2)\qquad (h_1\in X,h_2\in Y).
$$
We moreover assume that $I$ and $J$ are dual to each other in the following sense: there are subsets
$M\subset X$ and $N\subset Y$ such that the Euler-Lagrange equations for $I|_M,J|_N$ are equivalent, namely 
\begin{equation}\label{eq:(H)}
  I'(u)=0,\;  u\in M \quad\Leftrightarrow\quad  J'(v)=0,\;  v\in N
  \qquad\text{for } v(x)=F_u(x,u(x)), u(x)=G_v(x,v(x)).
\end{equation}
Note that the above example satisfies \eqref{eq:(H)} for $M=X=H^s(\R^N)$ and $N=Y=L^{r'}(\R^N)$.
In fact, the implication from left to right follows from the fractional Sobolev
Embedding $H^s(\R^N)\hookrightarrow L^r(\R^N)$ for $2\leq  r\leq \frac{2N}{N-2s}$. The opposite implication,
which should be seen as a regularity result, follows from bootstrapping $|v|^{r'-2}v=P(D)^{-1}v$ with
the aid of $\|P(D)^{-1} f\|_q \leq C\|f\|_p$ for
$0<\frac{1}{p}-\frac{1}{q}<\frac{2s}{N}$. These estimates follows from  Bessel potential estimates and
Mikhlin's multiplier Theorem, which is applicable due to $|\partial^\alpha
(m(\xi)^{-1})|\leq C_\alpha |\xi|^{-|\alpha|}(1+|\xi|)^{-2s}$.
Here one needs the sharp inequality $r<\frac{2N}{N-2s}$ to show that $v\in L^{r'}(\R^N)$ implies $v\in
L^2(\R^N)$ (via bootstrapping) and hence $u=|v|^{r'-2}v=P(D)^{-1}v\in H^s(\R^N)$.

\medskip 

We say that a ground state for $I|_M$ is a nontrivial solution of $I'(u)=0,u\in M$ with least
energy $I$ among all nontrivial critical points of $I|_M$. A dual ground state for $I|_M$ with respect to
$J|_N$ is a nontrivial solution of $I'(u)=0,u\in M$ such that the dual function $v(x):=F_u(x,u(x))$ is a
ground state for $J|_N$. 

\begin{prop}\label{prop:GS}
  Under the assumptions from above:
  \begin{itemize}
    \item[(i)] A solution $u^*\in M$ is a ground state for $I|_M$ if and only if 
    $\tilde u\in M, I'(\tilde u)=0$ implies 
    $$
        \varphi\left( F_u(\cdot,u^*)u^* - 2F(\cdot,u^*)\right) \leq 
      \varphi\left(F_u(\cdot,\tilde u)\tilde u - 2F(\cdot,\tilde u)\right). 
    $$ 
   \item[(ii)] A solution $v^*\in N$ is a ground state for $J|_N$ if and only if  
   $\tilde v\in N, J'(\tilde v)=0$ implies 
    $$
        \varphi\left( 2G(\cdot,v^*)- G_v(\cdot,v^*)v^* \right)  \leq  
        \varphi\left( 2G(\cdot,\tilde v)- G_v(\cdot,\tilde v)\tilde v \right). 
   $$ 
  \end{itemize}
\end{prop}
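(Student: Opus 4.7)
The plan is to compute $I(u^*)$ and $J(v^*)$ explicitly at critical points using the Nehari-type relations that come from testing the Euler--Lagrange equations against $u^*$ and $v^*$ themselves. Both assertions then reduce to an algebraic reshuffling of the definition of the energies.

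First I would test $I'(u)=0$ with the admissible direction $h_1=u$, which is allowed because $u\in M\subset X$. This yields the Nehari identity $Q_1(u,u)=\varphi(F_u(\cdot,u)u)$ (using the implicit symmetry of $Q_1$ that makes the stated formula for $I'$ consistent with the definition of $I$). Substituting this into $I(u)=\frac{1}{2}Q_1(u,u)-\varphi(F(\cdot,u))$ gives the clean expression
$$
I(u) = \frac{1}{2}\varphi\bigl(F_u(\cdot,u)u - 2F(\cdot,u)\bigr)
$$
valid on the set of critical points of $I|_M$. Comparing $I(u^*)$ with $I(\tilde u)$ for any other critical point $\tilde u\in M$ now immediately yields the equivalence stated in (i).

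For (ii) I would proceed identically: testing $J'(v)=0$ with $h_2=v$ gives $Q_2(v,v)=\varphi(G_v(\cdot,v)v)$, and inserting this into the definition of $J$ produces
$$
J(v) = \varphi(G(\cdot,v)) - \frac{1}{2}Q_2(v,v) = \frac{1}{2}\varphi\bigl(2G(\cdot,v) - G_v(\cdot,v)v\bigr)
$$
at every critical point of $J|_N$. The ordering of $J$-values among critical points is therefore exactly the ordering claimed in (ii).

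Since the entire argument is a one-line rewriting of the energies via the Nehari identities, there is no genuine obstacle to overcome. The only subtlety worth flagging is that the derivation of the Nehari identity uses the stated formulas for $I'$ and $J'$, which tacitly require $Q_1$ and $Q_2$ to be symmetric; alternatively one may read the hypothesis as stipulating that $I$ and $J$ are defined with the symmetrized forms, in which case the identities are exactly what is needed.
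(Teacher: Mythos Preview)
Your proposal is correct and is essentially identical to the paper's argument: the paper writes the same computation as $2I(u)=2I(u)-I'(u)[u]=\varphi(F_u(\cdot,u)u-2F(\cdot,u))$ for critical points $u$, and then notes that (ii) is analogous. Your remark about symmetry of $Q_1,Q_2$ is not really needed, since the stated formula for $I'(u)[h_1]$ evaluated at $h_1=u$ already gives $I'(u)[u]=Q_1(u,u)-\varphi(F_u(\cdot,u)u)$ regardless of symmetry.
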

\begin{proof}
  Part (i)  follows from $2I(u) = 2I(u)-I'(u)[u] =  \varphi\left( F_u(\cdot,u)u - 2F(x,u) \right)$ for all
  $u\in M$ such that $I'(u)=0$. Part  (ii) is proved analogously.
\end{proof}
 
  \begin{thm}
    Under the assumptions from above:  $u^*\in M$ is a ground state for $I|_M$ if and only if it is a dual
    ground state for $I|_M$ with respect to $J|_N$.
  \end{thm}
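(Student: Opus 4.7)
The plan is to establish a pointwise Legendre--Fenchel identity between $F$ and $G$ and combine it with Proposition~\ref{prop:GS} to see that, along the corresponding families of nontrivial critical points supplied by \eqref{eq:(H)}, the functionals $I$ and $J$ agree on corresponding pairs (up to at most an additive constant). First I would fix $x\in\Omega$ and consider
$$
  h(u) := F(x,u)+G(x,F_u(x,u))-u\,F_u(x,u).
$$
Since $G_v(x,\cdot)=F_u(x,\cdot)^{-1}$ gives $G_v(x,F_u(x,u))=u$, one computes
$$
  h'(u)=F_u(x,u)+u\,F_{uu}(x,u)-F_u(x,u)-u\,F_{uu}(x,u)=0,
$$
so $h$ is constant. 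Evaluating at $u=0$ and using $F(x,0)=G(x,0)=0$ together with the natural hypothesis $F_u(x,0)=0$ (which holds in the model $F(x,u)=\frac{1}{r}|u|^r$ and makes $u=0$ the trivial critical point of $I$), one obtains the pointwise duality
\begin{equation}\label{eq:LF}
  F(x,u)+G(x,v)=uv\qquad\text{whenever }v=F_u(x,u).
\end{equation}

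Next I would translate \eqref{eq:LF} into an identity of energies on the critical set. For $u\in M$ with $I'(u)=0$, Proposition~\ref{prop:GS}(i) combined with \eqref{eq:LF} yields
$$
  2I(u)=\varphi\bigl(F_u(\cdot,u)u-2F(\cdot,u)\bigr)=\varphi(uv-2F(\cdot,u))=\varphi\bigl(G(\cdot,v)-F(\cdot,u)\bigr),
$$
with $v=F_u(\cdot,u)$. Analogously, for $v\in N$ with $J'(v)=0$, Proposition~\ref{prop:GS}(ii) and \eqref{eq:LF} give
$$
  2J(v)=\varphi\bigl(2G(\cdot,v)-G_v(\cdot,v)v\bigr)=\varphi(2G(\cdot,v)-uv)=\varphi\bigl(G(\cdot,v)-F(\cdot,u)\bigr),
$$
with the same $u=G_v(\cdot,v)$. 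Hence $I(u)=J(v)$ whenever $u\in M$ and $v\in N$ are nontrivial critical points paired by $v=F_u(\cdot,u)$.

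Finally, hypothesis \eqref{eq:(H)} says precisely that $u\mapsto v=F_u(\cdot,u)$ is a bijection between the nontrivial critical sets $\{u\in M:I'(u)=0,u\neq 0\}$ and $\{v\in N:J'(v)=0,v\neq 0\}$, with inverse $v\mapsto G_v(\cdot,v)$ (the assumption $F_u(x,0)=0$ ensures that nontrivial corresponds to nontrivial). By the identity proved in the second paragraph, this bijection is energy-preserving, so $u^*\in M$ minimizes $I$ among nontrivial critical points of $I|_M$ if and only if $v^*=F_u(\cdot,u^*)$ minimizes $J$ among nontrivial critical points of $J|_N$, which is exactly the claim. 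The only delicate point is the verification that $\kappa(x):=G(x,F_u(x,0))$ vanishes in \eqref{eq:LF}; in the applications under consideration this is automatic from the explicit form of $F$, and even if $\kappa\not\equiv 0$ Step~2 shows $I(u)-J(v)\equiv-\varphi(\kappa)$ on corresponding pairs, so the conclusion persists by the same bijection argument.
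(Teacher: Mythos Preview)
Your proof is correct and follows essentially the same route as the paper: both arguments hinge on the Legendre--Fenchel identity $F(x,u)+G(x,v)=uv$ for conjugate pairs $v=F_u(x,u)$ (the paper derives the equivalent form $F(x,G_v(x,z))=zG_v(x,z)-G(x,z)$ via the substitution $s=G_v(x,t)$ in an integral, whereas you differentiate $h$), and then combine it with Proposition~\ref{prop:GS} and the bijection \eqref{eq:(H)}. Your explicit discussion of the hypothesis $F_u(x,0)=0$ is a welcome clarification, since the paper's substitution tacitly uses it as well to match the lower limits of integration.
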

  \begin{proof}
    Assume that $u^*\in M$ is a ground state for $I|_M$, define $v^*:= F_u(\cdot,u^*)\in N$. We have to show
    that $v^*$ is a dual ground state. So take any $v\in N$ such that $J'(v)=0$ and define $u:=G_v(\cdot,v)$.
    By \eqref{eq:(H)} we have $u\in M$ and $I'(u)=0$. Since $u^*$ is a ground state, we know from
    Proposition~\ref{prop:GS} 
    $$ 
      \varphi \left(  F_u(\cdot,u^*)u^* - 2F(\cdot,u^*)\right) 
      \leq \varphi \left(  F_u(\cdot,u)u - 2F(\cdot,u)\right).
   $$
   Plugging in $u^*=G_v(\cdot,v^*),u=G_v(\cdot,v)$ we find
   \begin{equation}\label{eq:energy}
    \varphi \left( v^*G_v(\cdot,v^*) - 2F(\cdot,G_v(\cdot,v^*))\right) \leq  
    \varphi\left( vG_v(\cdot,v)- 2F(\cdot,G_v(\cdot,v))\right).
   \end{equation}
   Since $F_u(x,\cdot)$ and $G_v(x,\cdot)$ are inverses of each other, we have for all $x\in\Omega,z\in\R$
   $$
     F(x,G_v(x,z))
     = \int_0^{G_v(x,z)} F_u(x,s)\,ds
     = \int_0^z t G_{vv}(x,t)\,dt
     = zG_v(x,z)-G(x,z).
   $$
   Using this identity for $z=v^*(x),z=v(x)$, respectively, we obtain from~\eqref{eq:energy} 
   $$
    \varphi\left( 2G(\cdot,v^*)- v^*G_v(\cdot,v^*)\right) 
    \leq \varphi\left( 2G(\cdot,v)-  vG_v(\cdot,v)\right).  
  $$    
  Given that $v,v^*\in N$ are critical points of $J$ and $v$ was arbitrary, this means that $v^*$ is a ground
  state for $J|_N$. Hence, by definition, $u^*$ is a dual ground of $I|_M$ with respect to $J_N$. 
  In a similar way one shows that a dual ground state with respect to $J|_N$ yields a ground state solution.
\end{proof}

\section*{Acknowledgments}

Funded by the Deutsche Forschungsgemeinschaft (DFG, German Research Foundation) -- Project-ID 258734477 --
SFB~1173.

\bibliographystyle{abbrv}
\bibliography{biblio}

\begin{thebibliography}{10}

\bibitem{AlamaLi}
S.~Alama and Y.~Y. Li.
\newblock Existence of solutions for semilinear elliptic equations with
  indefinite linear part.
\newblock {\em J. Differential Equations}, 96(1):89--115, 1992.

\bibitem{AzzBenDApFor_Static}
A.~Azzollini, V.~Benci, T.~D'Aprile, and D.~Fortunato.
\newblock Existence of static solutions of the semilinear {M}axwell equations.
\newblock {\em Ric. Mat.}, 55(2):283--297, 2006.

\bibitem{BaDoPlRe_GroundStates}
T.~Bartsch, T.~Dohnal, M.~Plum, and W.~Reichel.
\newblock Ground states of a nonlinear curl-curl problem in cylindrically
  symmetric media.
\newblock {\em NoDEA Nonlinear Differential Equations Appl.}, 23(5):Art. 52,
  34, 2016.

\bibitem{BerLio}
H.~Berestycki and P.-L. Lions.
\newblock Nonlinear scalar field equations. {I}. {E}xistence of a ground state.
\newblock {\em Arch. Rational Mech. Anal.}, 82(4):313--345, 1983.

\bibitem{Brezis}
H.~Brezis.
\newblock {\em Functional analysis, {S}obolev spaces and partial differential
  equations}.
\newblock Universitext. Springer, New York, 2011.

\bibitem{BugLenzSchiSok}
L.~Bugiera, E.~Lenzmann, A.~Schikorra, and J.~Sok.
\newblock On symmetry of traveling solitary waves for dispersion generalized
  {NLS}.
\newblock {\em Nonlinearity}, 33(6):2797--2819, 2020.

\bibitem{KCChang}
K.-C. Chang.
\newblock {\em Methods in nonlinear analysis}.
\newblock Springer Monographs in Mathematics. Springer-Verlag, Berlin, 2005.

\bibitem{CosMan}
L.~Cossetti and R.~Mandel.
\newblock A limiting absorption principle for {H}elmholtz systems and
  time-harmonic isotropic {M}axwell's equations.
\newblock {\em Journal of Functional Analysis}, 281(11):109233, 2021.

\bibitem{DApSic_Magnetostatic}
T.~D'Aprile and G.~Siciliano.
\newblock Magnetostatic solutions for a semilinear perturbation of the
  {M}axwell equations.
\newblock {\em Adv. Differential Equations}, 16(5-6):435--466, 2011.

\bibitem{GoffiPlum}
F.~Z. Goffi, A.~Khrabustovskyi, R.~Venkitakrishnan, C.~Rockstuhl, and M.~Plum.
\newblock Higher order constitutive relations and interface conditions for
  metamaterials with strong spatial dispersion.
\newblock {\em Phys. Lett. A}, 412:Paper No. 127570, 11, 2021.

\bibitem{Graf}
L.~Grafakos.
\newblock {\em Classical {F}ourier analysis}, volume 249 of {\em Graduate Texts
  in Mathematics}.
\newblock Springer, New York, third edition, 2014.

\bibitem{HirRei_Cylindrical}
A.~Hirsch and W.~Reichel.
\newblock Existence of cylindrically symmetric ground states to a nonlinear
  curl-curl equation with non-constant coefficients.
\newblock {\em Z. Anal. Anwend.}, 36(4):419--435, 2017.

\bibitem{Kato}
T.~Kato.
\newblock Growth properties of solutions of the reduced wave equation with a
  variable coefficient.
\newblock {\em Comm. Pure Appl. Math.}, 12:403--425, 1959.

\bibitem{Kinsler}
P.~Kinsler.
\newblock A new introduction to spatial dispersion: Reimagining the basic
  concepts.
\newblock {\em Photonics and Nanostructures - Fundamentals and Applications},
  43:100897, 2021.

\bibitem{KoTa}
H.~Koch and D.~Tataru.
\newblock Carleman estimates and absence of embedded eigenvalues.
\newblock {\em Comm. Math. Phys.}, 267(2):419--449, 2006.

\bibitem{KrolBang}
W.~Krolikowski, O.~Bang, J.~J. Rasmussen, and J.~Wyller.
\newblock Modulational instability in nonlocal nonlinear kerr media.
\newblock {\em Phys. Rev. E}, 64:016612, Jun 2001.

\bibitem{LiebLoss}
E.~H. Lieb and M.~Loss.
\newblock {\em Analysis}, volume~14 of {\em Graduate Studies in Mathematics}.
\newblock American Mathematical Society, Providence, RI, second edition, 2001.

\bibitem{Lions_ConcComp}
P.-L. Lions.
\newblock The concentration-compactness principle in the calculus of
  variations. {T}he locally compact case. {I}.
\newblock {\em Ann. Inst. H. Poincar\'{e} Anal. Non Lin\'{e}aire},
  1(2):109--145, 1984.

\bibitem{Lunardi}
A.~Lunardi.
\newblock {\em Interpolation theory}, volume~16 of {\em Appunti. Scuola Normale
  Superiore di Pisa (Nuova Serie) [Lecture Notes. Scuola Normale Superiore di
  Pisa (New Series)]}.
\newblock Edizioni della Normale, Pisa, 2018.
\newblock Third edition [of MR2523200].

\bibitem{Man_Uncountably}
R.~Mandel.
\newblock Uncountably many solutions for nonlinear {H}elmholtz and curl-curl
  equations.
\newblock {\em Adv. Nonlinear Stud.}, 19(3):569--593, 2019.

\bibitem{Med_GS}
J.~Mederski.
\newblock Ground states of time-harmonic semilinear {M}axwell equations in
  {$\Bbb{R}^3$} with vanishing permittivity.
\newblock {\em Arch. Ration. Mech. Anal.}, 218(2):825--861, 2015.

\bibitem{KrolBang2}
N.~I. Nikolov, D.~Neshev, O.~Bang, and W.~Z. Kr\'olikowski.
\newblock Quadratic solitons as nonlocal solitons.
\newblock {\em Phys. Rev. E}, 68:036614, Sep 2003.

\bibitem{Schoonover}
R.~W. Schoonover, J.~M. Rutherford, O.~Keller, and P.~S. Carney.
\newblock Non-local constitutive relations and the quasi-homogeneous
  approximation.
\newblock {\em Physics Letters A}, 342(5):363--367, 2005.

\bibitem{SmithVierKrollSchultz}
D.~R. Smith, D.~C. Vier, N.~Kroll, and S.~Schultz.
\newblock Direct calculation of permeability and permittivity for a left-handed
  metamaterial.
\newblock {\em Applied Physics Letters}, 77(14):2246--2248, 2000.

\bibitem{Strauss}
W.~A. Strauss.
\newblock Existence of solitary waves in higher dimensions.
\newblock {\em Comm. Math. Phys.}, 55(2):149--162, 1977.

\end{thebibliography}

\end{document}